\newcommand{\N}{\mathbb{N}}
\newcommand{\C}{\mathbb{C}}
\newcommand{\h}{\mathcal{H}}
\newcommand{\K}{\mathcal{K}}
\newcommand{\B}{\mathcal{B}}
\newcommand{\pr}{\text{prop}}
\newcommand{\Manoa}{M\=anoa}
\newcommand{\Hawaii}{Hawai\kern.05em`\kern.05em\relax i}
\theoremstyle{definition} \newtheorem{cedef}{Definition}[section]
\theoremstyle{definition} \newtheorem{roealg}[cedef]{Definition}
\theoremstyle{definition} \newtheorem{udbg}[cedef]{Definition}
\theoremstyle{plain} \newtheorem{main1}[cedef]{Theorem}
\theoremstyle{plain} \newtheorem{question2}[cedef]{Question}
\theoremstyle{plain} \newtheorem{question}[cedef]{Question}
\theoremstyle{definition} 
\theoremstyle{definition} 
\theoremstyle{remark} 
\theoremstyle{plain} \newtheorem{main}[cedef]{Theorem}
\theoremstyle{plain} \newtheorem{groupc}[cedef]{Corollary}
\theoremstyle{remark} \newtheorem{mainrem}[cedef]{Remarks}
\theoremstyle{definition} \newtheorem{supp}{Definition}[section]
\theoremstyle{definition} 
\theoremstyle{definition} \newtheorem{roealg2}[supp]{Definition}
\theoremstyle{remark} \newtheorem{roeex}[supp]{Examples}
\theoremstyle{remark} 
\theoremstyle{remark} \newtheorem{roerem}[supp]{Remarks}
\theoremstyle{plain} \newtheorem{spatiallem}{Lemma}[section]
\theoremstyle{plain} \newtheorem{jlem}[spatiallem]{Lemma}
\theoremstyle{plain} \newtheorem{jlem2}[spatiallem]{Lemma}
\theoremstyle{plain} \newtheorem{fadthe}{Theorem}[section]
\theoremstyle{definition} \newtheorem{msp}[fadthe]{Definition}
\theoremstyle{plain} \newtheorem{amsp}[fadthe]{Theorem}
\theoremstyle{plain} 
\theoremstyle{definition} 
\theoremstyle{plain} \newtheorem{cloudlem}[fadthe]{Lemma}
\theoremstyle{plain} 
\theoremstyle{remark} 
\theoremstyle{plain} \newtheorem{fadlem2}[fadthe]{Lemma}
\theoremstyle{plain} \newtheorem{cloudbound1}[fadthe]{Lemma}
\theoremstyle{plain} \newtheorem{athe}{Theorem}[section]
\theoremstyle{plain} 
\theoremstyle{definition} 
\theoremstyle{plain} 
\theoremstyle{plain} \newtheorem{cloudbound2}[athe]{Lemma}
\theoremstyle{plain} \newtheorem{suthe}{Theorem}[section]
\theoremstyle{plain} \newtheorem{morcor}[suthe]{Corollary}
\theoremstyle{plain} \newtheorem{groupc2}[suthe]{Corollary}
\theoremstyle{plain} \newtheorem{sulem}[suthe]{Lemma}
\theoremstyle{plain} \newtheorem{sucloud}[suthe]{Lemma}
\theoremstyle{definition} \newtheorem{corcat}{Definition}[section]
\theoremstyle{definition} \newtheorem{roecat}[corcat]{Definiton}
\theoremstyle{definition} \newtheorem{cover}[corcat]{Definition}
\theoremstyle{plain} \newtheorem{covlem}[corcat]{Lemma}
\theoremstyle{definition} 
\theoremstyle{definition} 
\theoremstyle{plain} \newtheorem{equivthe}[corcat]{Theorem}
\title{On Rigidity of Roe Algebras}
\author{J\'{a}n \v{S}pakula\footnote{Corresponding author} ~and Rufus Willett}
\begin{document}

\maketitle

\noindent
\textsc{Mathematical Sciences, University of Southampton, Southampton
SO17 1BJ, United Kingdom}\\
E-mail: \texttt{jan.spakula@soton.ac.uk}\\

\noindent
\textsc{2565 McCarthy Mall, University of \Hawaii\ at \Manoa, Honolulu, HI 96822, USA}\\
E-mail: \texttt{rufus@math.hawaii.edu}

\begin{abstract}
Roe algebras are $C^*$-algebras built using large-scale (or `coarse') aspects of a metric space $(X,d)$.  In the special case that $X=\Gamma$ is a finitely generated group and $d$ is a word metric, the simplest Roe algebra associated to $(\Gamma,d)$ is isomorphic to the crossed product $C^*$-algebra $l^\infty(\Gamma)\rtimes_r\Gamma$.  

Roe algebras are coarse invariants, meaning that if $X$ and $Y$ are coarsely equivalent metric spaces, then their Roe algebras are isomorphic.  Motivated in part by the coarse Baum-Connes conjecture, we show that the converse statement is true for a very large classes of spaces.  This can be thought of as a `$C^*$-rigidity result': it shows that the Roe algebra construction preserves a large amount of information about the space, and is thus surprisingly `rigid'.

As an example of our results, in the group case we have that if $\Gamma$ and $\Lambda$ are finitely generated amenable, hyperbolic, or linear, groups such that the crossed products $l^\infty(\Gamma)\rtimes_r\Gamma$ and $l^\infty(\Lambda)\rtimes_r\Lambda$ are isomorphic, then $\Gamma$ and $\Lambda$ are quasi-isometric.\\

\noindent
\emph{MSC:} primary 46L85, 51K05.\\
\noindent
\emph{Keywords:} coarse geometry, coarse Baum-Connes conjecture. 
\end{abstract}

\section{Introduction}

This piece asks to what extent certain noncommutative $C^*$-algebras associated to a metric space $X$, called Roe algebras, capture the large scale geometry of $X$.  As such it forms part of the noncommutative geometry program of Connes \cite{Connes:1994zh}.   

For us, `large scale geometry' will mean those aspects of metric geometry that are preserved under coarse equivalence as in the following definition.

\begin{cedef}\label{cedef}
Let $X$ and $Y$ be metric spaces.  A (not necessarily continuous) map $f:X\to Y$ is said to be \emph{uniformly expansive} if for all $R>0$ there exists $S>0$ such that if $x_1,x_2\in X$ satisfy $d(x_1,x_2)\leq R$, then $d(f(x_1),f(x_2))\leq S$.

Two maps $f,g:X\to Y$ are said to be \emph{close} if there exists $C>0$ such that $d(f(x),g(x))\leq C$ for all $x\in X$.

$X$ and $Y$ are said to be \emph{coarsely equivalent} if there exist uniformly expansive maps $f:X\to Y$ and $g:Y\to X$ such that $f\circ g$ and $g\circ f$ are close to the identity  maps on $Y$ and $X$ respectively.  
\end{cedef}

The metric spaces we study will have the following bounded geometry property: natural examples of metric spaces like finitely generated discrete groups with word metrics, and complete Riemannian manifolds (under additional minor hypotheses) satisfy this condition up to coarse equivalence. 

\begin{udbg}\label{udbg}
Let $X$ be a metric space.  $X$ is said to have \emph{bounded geometry} if for all $R\geq0$ there exists $N_R\in\N$ such that for all $x\in X$, the ball of radius $R$ about $x$ has at most $N_R$ elements.

Throughout the remainder of this piece, the word \emph{space} will mean bounded geometry metric space.
\end{udbg}

In this introduction, we will discuss motivations for the problem: firstly, uniform Roe algebras in abstraction; and secondly, Roe algebras, and the coarse Baum-Connes conjecture.

\subsection*{Uniform Roe algebras}

\begin{roealg}\label{roealg}
Let $X$ be a space and let $T=(T_{xy})_{x,y\in X}$ be an $X$-by-$X$ indexed matrix with entries $T_{xy}$ in $\C$.  The \emph{propagation} of $T$ is defined to be
$$
\pr(T):=\sup\{d(x,y)~|~T_{xy}\neq 0\}
$$
(possibly infinite).

The \emph{algebraic uniform Roe algebra\footnote{Also called the \emph{translation algebra of $X$} after Gromov \cite[page 262]{Gromov:1993tr}.}}, denoted $\C_u[X]$ is defined to be the collection of all finite propagation matrices with uniformly bounded entries, equipped with the $*$-algebra structure coming from the usual matrix operations.

The $*$-algebra $\C_u[X]$ admits an obvious (faithful) representation as bounded operators on $l^2(X)$; the corresponding norm completion is denoted $C^*_u(X)$ and called the \emph{uniform Roe algebra of $X$}. 
\end{roealg}

The schematic below represents an element of $\C_u[X]$.\vspace{0.3cm}\\
\setlength{\unitlength}{8mm}
\begin{picture}(12,7)(-8,-4)
\put(-3,0){\vector(1,0){6}}
\put(0,3){\vector(0,-1){6}}
\put(-0.4,-3.5){$y\in X$}
\put(3.5,-0.1){$x\in X$}
\put(-2.5,1){\vector(1,1){1.5}}
\put(-1,2.5){\vector(-1,-1){1.5}}
\put(-1.9,1.3){\small{$\text{prop}(T)$}}
\put(1,1){$0$}
\put(-1.2,-1.2){$0$}
\put(-0.5,2.5){$0$}
\put(2.5,-0.5){$0$}
\put(1,2.5){$0$}
\put(2.5,1){$0$}
\put(-2.7,0.3){$0$}
\put(-2.7,-1.2){$0$}
\put(-1.2,-2.7){$0$}
\put(0.3,-2.7){$0$}
\put(-2.7,-2.7){$0$}
\put(2.5,2.5){$0$}
\thicklines
\put(-3,1.5){\line(1,-1){4.5}}
\put(-1.5,3){\line(1,-1){4.5}}
\end{picture}\\
As we hope is clear from the picture, operators in $\C_u[X]$ can only `move' elements of a space $X$ a bounded distance, and are thus connected to its geometry.  

It is by now well-known that this connection between $C^*$-algebraic properties of $C^*_u(X)$ and geometric properties of $X$ carries quite a lot of information. For example Skandalis--Tu--Yu \cite[Theorem 5.3]{Skandalis:2002ng} and Brown--Ozawa \cite[Theorem 5.5.7]{Brown:2008qy}  have shown that $C^*_u(X)$ is nuclear if and only if $X$ has Yu's property A \cite[Definition 2.1] {Yu:200ve}.  The papers \cite{Guentner:2022hc,Ozawa:2000th,Chen:2004bd,Chen:2005cy,Chen:2008so,Spakula:2009rr,Spakula:2009tg,Winter:2010eb} contain more examples along these lines.

It is thus natural to ask: to what extent does $C^*_u(X)$ (or $\C_u[X]$) determine the coarse equivalence type of $X$?  One of the main results of this paper is the following partial answer.

\begin{main1}\label{main1}
Let $X$ and $Y$ be spaces.  Then:
\begin{itemize}
\item if $\C_u[X]$ and $\C_u[Y]$ are $*$-isomorphic then  $X$ and $Y$ are coarsely equivalent;
\item if $X$ and $Y$ have property A, and if $C^*_u(X)$ and $C^*_u(Y)$ are Morita equivalent (in particular, if they are $*$-isomorphic) then $X$ and $Y$ are coarsely equivalent.
\end{itemize}
\end{main1}
These results follow from Theorems \ref{fadthe}, \ref{athe}, and Corollary \ref{morcor} below.  Note that the first part has a purely algebraic statement in terms of translation algebras.

We will not define property A here as we do not need the usual definition, but instead an equivalent property\footnote{The perhaps less well-known \emph{metric sparsification property} \cite[Definition 3.1]{Chen:2008so}.}; we refer the reader to \cite{Willett:2009rt} for a survey of property A, which has been very widely studied.  Property A holds for a very large class of spaces: notable examples of discrete groups with property A are amenable groups, hyperbolic groups \cite{Roe:2005rt}, and linear groups \cite{Guentner:2005xr}.  The only groups for which property A is known to fail are the random groups of Gromov and Arzhantseva--Delzant whose Cayley graphs coarsely contain a sequence of expanding graphs \cite{Gromov:2003gf,Arzhantseva:2008bv}.

The following purely group theoretic corollary seems to be of interest in its own right.  Quasi-isometry is a stronger version of coarse equivalence that is more widely studied in geometric group theory.

\begin{groupc}\label{groupc}
Say $\Gamma$ and $\Lambda$ are finitely generated $C^*$-exact groups. Then the following are equivalent:
\begin{itemize}
\item $\Gamma$ and $\Lambda$ are quasi-isometric;
\item $l^\infty(\Gamma)\rtimes_r\Gamma$ and $l^\infty(\Lambda)\rtimes_r\Lambda$ are Morita equivalent.
\end{itemize}
If moreover $\Gamma$ and $\Lambda$ are non-amenable, then these are also equivalent to:
\begin{itemize}
\item $l^\infty(\Gamma)\rtimes_r\Gamma$ and $l^\infty(\Lambda)\rtimes_r\Lambda$ are $*$-isomorphic.
\end{itemize}
\end{groupc}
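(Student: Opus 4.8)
The plan is to derive this corollary by feeding the standard group-theoretic dictionary into Theorem \ref{main1}. First I would record three translations. (i) For a finitely generated group $\Gamma$ with a word metric, the uniform Roe algebra is canonically $*$-isomorphic to the crossed product, $C^*_u(\Gamma)\cong l^\infty(\Gamma)\rtimes_r\Gamma$, so every statement about the crossed products in the corollary is really a statement about uniform Roe algebras. (ii) A word metric makes $\Gamma$ a quasi-geodesic space, and any coarse equivalence between quasi-geodesic spaces is automatically a quasi-isometry; hence for finitely generated groups ``coarsely equivalent'' and ``quasi-isometric'' coincide (and non-amenability is preserved by either, so in the last bullet one group is non-amenable iff both are). (iii) For a countable discrete group, $C^*$-exactness is equivalent to property A (Ozawa \cite{Ozawa:2000th}), so the exactness hypothesis is exactly what is needed to invoke the second bullet of Theorem \ref{main1}.

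Granting this dictionary, I would first establish the equivalence of quasi-isometry with Morita equivalence, valid for all exact groups. The forward direction is the elementary, functorial half of coarse invariance: a coarse equivalence $\Gamma\simeq\Lambda$ induces a stable isomorphism $C^*_u(\Gamma)\otimes\K\cong C^*_u(\Lambda)\otimes\K$, i.e.\ a Morita equivalence of the (unital) uniform Roe algebras (see e.g.\ \cite{Brown:2008qy}); combined with (i) and (ii) this gives quasi-isometry $\Rightarrow$ Morita equivalence of the crossed products. The reverse direction is the substantive input and is precisely the second bullet of Theorem \ref{main1}: if $\Gamma$ and $\Lambda$ have property A and $C^*_u(\Gamma)$, $C^*_u(\Lambda)$ are Morita equivalent, then $\Gamma$ and $\Lambda$ are coarsely equivalent, hence quasi-isometric by (ii). Property A is supplied by exactness through (iii).

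For the non-amenable addendum I would show that $*$-isomorphism joins the list. One implication is free: a $*$-isomorphism is in particular a Morita equivalence, so $*$-isomorphism $\Rightarrow$ quasi-isometry follows from the equivalence just proved. The content is the converse, quasi-isometry $\Rightarrow$ $*$-isomorphism, and here the key extra ingredient is a theorem of Whyte \cite{Whyte:1999rt}: two quasi-isometric bounded geometry spaces, one (equivalently both) of which is non-amenable, are in fact bilipschitz equivalent, and so admit a coarse equivalence that is a \emph{bijection}. A bijective coarse equivalence $u\colon\Gamma\to\Lambda$ then upgrades to an honest $*$-isomorphism: the unitary $V\colon l^2(\Gamma)\to l^2(\Lambda)$, $\delta_g\mapsto\delta_{u(g)}$, conjugates finite propagation operators to finite propagation operators, since uniform expansiveness of $u$ and of $u^{-1}$ controls propagation in both directions, so $T\mapsto VTV^*$ restricts to a $*$-isomorphism $C^*_u(\Gamma)\cong C^*_u(\Lambda)$.

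The main obstacle is entirely packaged inside Theorem \ref{main1} (the rigidity direction Morita equivalence $\Rightarrow$ coarse equivalence), which I am assuming. Beyond that, the one subtlety to get right is the distinction between the two possible conclusions: a mere coarse equivalence yields only a stable isomorphism, i.e.\ a Morita equivalence, of the unital algebras $C^*_u$, whereas an actual $*$-isomorphism requires a \emph{bijective} coarse equivalence. This is exactly why $*$-isomorphism appears on the list only under non-amenability, where Whyte's theorem promotes a quasi-isometry to a bijection; for amenable groups that promotion can genuinely fail, so $*$-isomorphism cannot be expected in general.
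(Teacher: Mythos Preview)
Your proposal is correct and follows essentially the same route as the paper: identify the crossed products with uniform Roe algebras, translate exactness into property A, invoke the Morita-equivalence part of Theorem~\ref{main1} (the paper routes this through Corollary~\ref{morcor}) together with the standard fact that coarse equivalence yields Morita equivalence \cite{Brodzki:2007mi}, and then use Whyte's theorem for the non-amenable upgrade to a bijective coarse equivalence and hence a $*$-isomorphism. The only cosmetic difference is that you spell out the conjugation unitary $V$ explicitly, whereas the paper leaves that step to the reader.
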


It might be interesting to give a direct $C^*$-algebraic proof of the equivalence of the second and third points above (the authors have no idea how to approach this).

There is an analogy between our results in coarse geometry / $C^*$-algebras and recent deep results of Popa and others (see \cite{Popa:2007fk} for a survey) in the theory of group actions /  von Neumann algebras.  One of the questions these authors consider is as follows: if the group-measure-space von Neumann algebras $L^\infty([0,1]^\Gamma)\overline{\rtimes}\Gamma$ and $L^\infty([0,1]^{\Lambda})\overline{\rtimes}\Lambda$ associated to the Bernoulli shift actions of discrete groups $\Gamma$, $\Lambda$ are isomorphic, then are the actions conjugate (and in particular, are the groups isomorphic)?  This question has (at least) formal similarities to Theorem \ref{groupc} above; moreover, Thomas Sinclair has pointed out to us that there are some interesting parallels between our work and that of Ioana \cite{Ioana:2010uq} on von Neumann algebras associated to Bernoulli shift actions as above.

\subsection*{Roe algebras and the coarse Baum-Connes conjecture}\

The above gives an outline of our main results in the abstract.  The main motivation does not come from abstract $C^*$-algebraic considerations, however, but from the coarse Baum-Connes conjecture; we will now briefly discuss this.

This conjecture involves the \emph{Roe algebra} $C^*(X)$ of a space $X$; this is defined in the same way as $C^*_u(X)$, but the matrix entries are allowed to be compact operators rather than complex numbers - see Section  \ref{defsec} below for a precise definition.  This use of the compact operators makes $C^*(X)$ less tractable than $C_u^*(X)$ as a $C^*$-algebra, but rather more tractable $K$-theoretically.  

Roe algebras were originally introduced by John Roe for the purpose of studying index theory on non-compact manifolds\footnote{See \cite{Roe:1988qy,Roe:1993lq} for the original articles, and \cite[Chapter 3]{Roe:1996dn} and \cite[Chapter 4]{Roe:2003rw} for more recent treatments.}; this program led to the coarse Baum-Connes conjecture (amongst other things), which asks whether a certain assembly map 
\begin{equation}\label{cbc}
\mu:\lim_{R\to\infty}K_*(P_R(X))\to K_*(C^*(X))
\end{equation}
is an isomorphism.  The coarse Baum-Connes conjecture has been very widely studied, and is responsible for some of the best results on problems such as the Novikov conjecture in high-dimensional topology, and the existence of positive scalar curvature metrics.  The original sources are the articles of Roe \cite[Conjecture 6.30]{Roe:1993lq}, Higson--Roe \cite{Higson:1995fv}, and Yu \cite{Yu:1995bv}.  

For the present discussion, the left hand side of line \eqref{cbc} above should be thought of as the `large-scale algebraic topology of $X$' and the right hand side as the `algebraic topology of $C^*(X)$'.  The conjecture asks whether certain natural geometric invariants (to be thought of as coming from elliptic differential operators on $X$) living in the left hand side are preserved by passage to the right hand side, where they have much better properties (e.g.\ vanishing properties, and homotopy invariance).  In other words, the coarse Baum-Connes conjecture asks the following question. 
 
\begin{question2}\label{q2}
If $X$ and $Y$ are metric spaces, how well does the algebraic topology of $C^*(X)$ model the large scale algebraic topology of $X$?
\end{question2}

This motivates the following rigidity question.  It can be thought of as a more precise analogue of Question \ref{q2} above: it asks about information at the level of algebras and spaces themselves, rather than merely at the level of algebraic-topological invariants associated to these algebras and spaces.   A version of this question was also asked by Winter and Zacharias \cite[page 3]{Winter:2010eb}. 

\begin{question}\label{q1}
If $X$ and $Y$ are spaces such that $C^*(X)$ and $C^*(Y)$ are $*$-isomorphic, are $X$ and $Y$ coarsely equivalent?
\end{question}

Question \ref{q1} is called a `rigidity question' as it asks if the $C^*$-algebra $C^*(X)$ actually encodes \emph{all} of the large scale geometry of $X$, and is thus surprisingly `rigid'.  Our main results also apply in this context.

\begin{main}\label{main}
Let $X$ and $Y$ be spaces with property A.  If $C^*(X)$ and $C^*(Y)$ are $*$-isomorphic,  then $X$ and $Y$ are coarsely equivalent.
\end{main}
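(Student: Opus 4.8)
The plan is to reduce Theorem \ref{main} to the uniform case already recorded in Theorem \ref{main1}, by exhibiting the Roe algebra as a stabilisation of the uniform Roe algebra. Recall that $C^*(X)$ is built from $X$-by-$X$ matrices whose entries are compact operators on an auxiliary infinite-dimensional Hilbert space $\h$; concretely, one works inside $\B(l^2(X)\otimes\h)$ and takes the norm closure of the locally compact, finite-propagation operators. The first step is therefore to verify the identification
$$
C^*(X)\cong C^*_u(X)\otimes\K,
$$
where $\K=\K(\h)$ denotes the compact operators. Indeed, for a finite-propagation $T=(T_{xy})$ the local compactness condition is, using bounded geometry, equivalent to each entry $T_{xy}$ lying in $\K$; hence the finite-propagation locally compact operators coincide (before closure) with the algebraic tensor product $\C_u[X]\otimes\K$. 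Since $\K$ is nuclear the tensor product is unambiguous, and passing to norm closures inside $\B(l^2(X)\otimes\h)$ yields $C^*(X)\cong C^*_u(X)\otimes\K$. The same identification holds verbatim for $Y$.

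Granting this, the second step is purely formal. From the hypothesis $C^*(X)\cong C^*(Y)$ we obtain $C^*_u(X)\otimes\K\cong C^*_u(Y)\otimes\K$, that is, $C^*_u(X)$ and $C^*_u(Y)$ are stably isomorphic. Now $C^*_u(X)$ is unital (the identity matrix has propagation zero and bounded entries), hence $\sigma$-unital, and likewise for $C^*_u(Y)$. The Brown--Green--Rieffel stabilisation theorem then upgrades the stable isomorphism to a (strong) Morita equivalence between $C^*_u(X)$ and $C^*_u(Y)$.

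The final step applies the second bullet of Theorem \ref{main1}: since $X$ and $Y$ have property A and $C^*_u(X)$, $C^*_u(Y)$ are Morita equivalent, we conclude that $X$ and $Y$ are coarsely equivalent, which is precisely the assertion of Theorem \ref{main}.

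The genuinely substantial content --- that Morita equivalence of uniform Roe algebras of property A spaces forces coarse equivalence --- is entirely contained in Theorem \ref{main1}, so the present argument is essentially a corollary of it together with a standard stabilisation. Accordingly, the only point demanding real care is the first step, the identification $C^*(X)\cong C^*_u(X)\otimes\K$. The subtlety there is that the Roe algebra depends \emph{a priori} on the choice of (ample) geometric module used in the definition in Section \ref{defsec}, and one wants to be certain that this module yields exactly the stabilisation of $\C_u[X]$ rather than some larger algebra; I expect this to be the main, though routine, obstacle, and it is the place where the bounded-geometry and local-compactness hypotheses are used in an essential way.
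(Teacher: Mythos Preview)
Your argument has a genuine gap in the first step: the identification $C^*(X)\cong C^*_u(X)\otimes\K$ is \emph{false} in general. What you have described is the stable uniform Roe algebra $C^*_s(X)$ from Examples~\ref{roeex}, and Remark~\ref{roerem} part~2 explicitly records that $C^*_s(X)\subsetneq C^*(X)$ and that these algebras are not even abstractly isomorphic. The error is in the sentence ``the finite-propagation locally compact operators coincide (before closure) with the algebraic tensor product $\C_u[X]\otimes\K$'': an element of the algebraic tensor product has all matrix entries lying in the linear span of finitely many compact operators, whereas a general element of $\C[X]$ need only have each entry individually compact. For a concrete obstruction, take $X=\N$, fix an orthonormal basis $(e_n)$ of $\h$, and let $T$ be the diagonal operator with $T_{nn}$ the rank-one projection onto $\C e_n$. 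Then $T\in\C[X]$ has propagation zero, but $\|T-S\|\geq 1$ for every $S\in C^*_s(X)$, since for any finite-dimensional $\h_S\subseteq\h$ there are infinitely many $n$ with $e_n\perp\h_S$. Thus $T\notin C^*_u(X)\otimes\K$, and your reduction collapses.

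The paper instead treats $C^*(X)$ directly, as one instance of the general framework of Roe-type algebras: Theorem~\ref{main} is the special case $A^*(X)=C^*(X)$ of Theorem~\ref{fadthe}. The proof is intrinsic to the spatial structure, using that the isomorphism is implemented by a unitary $U$ (Lemma~\ref{spatiallem}), the key propagation lemma~\ref{jlem}, and the metric sparsification property (equivalent to property~A) to locate for each $x$ a point $f(x)\in Y$ where $U(\delta_x\otimes v_0)$ has a definite amount of mass. Your strategy of passing through Morita equivalence of uniform Roe algebras is exactly how the paper handles the genuinely stable case $C^*_s(X)$ in Section~\ref{cpsec}, but that argument does not extend to the full Roe algebra.
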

An analogue of the purely algebraic part of Theorem \ref{main1} also holds in this context: see Theorems \ref{fadthe} and  \ref{athe} below. 

We conclude this introduction with some remarks on Theorems \ref{main1} and \ref{main}. 

\begin{mainrem}\label{mainrem}
\begin{itemize}
\item There are several natural variants of uniform Roe  and Roe algebras that have been introduced for the study of index theory and coarse geometry: see Examples \ref{roeex} below.  Analogues of Theorem \ref{main} and Theorem \ref{main1} apply in these cases.
\item It follows from our methods that, up to a natural notion of equivalence in each case, coarse equivalences of spaces are essentially the same thing as spatially implemented isomorphisms of (algebraic) Roe algebras.  This is made precise in Appendix \ref{cat}.
\item The coarse Baum-Connes conjecture is false in general: see \cite[Section 8]{Yu:1998wj} and \cite{Higson:1999km,Higson:2002la}.  Our work here is a step towards a more precise geometric understanding of why one should expect it to be true in `good' cases.  We hope to be able to expand on this project in order to elucidate what goes wrong in more exotic cases.  A promising direction here seems to be provided by the ghost ideals studied in \cite{Chen:2004bd,Chen:2005cy,Wang:2007uf} and \cite[Section 11.5.2]{Roe:2003rw}.
\end{itemize}
\end{mainrem}

\subsection*{Outline of the piece}

Section \ref{defsec} gives definitions and notation.  The work below will actually be carried out in a more general setting, using what we call `Roe-type algebras': the extra generality is not a goal in its own right, but is useful as it allows various special cases to be covered in a common framework; we hope it also makes the structure of the proofs more transparent.  

Section \ref{apsec} covers some analytic preliminaries: the first shows that $*$-isomorphisms between Roe-type algebras are spatially implemented; the second shows that `cancellation of arbitrarily large propagation' cannot occur when one sums operators in a Roe-type algebra.  The second is perhaps the main ingredient in our results.  

Section \ref{fadsec} then proves Theorem \ref{main} (and some related results).  Section \ref{asec} proves the purely algebraic statement from Theorem \ref{main1} and related results.   Section \ref{cpsec} provides the extra technicalities needed to prove the Morita equivalence results from Theorem \ref{main1} and Corollary \ref{groupc}.

Finally, Appendix \ref{cat} defines categories of spaces in which the morphisms are (equivalence classes of) coarse equivalences and (equivalence classes of) spatially implemented isomorphisms of Roe algebras, and shows that these two categories are isomorphic.

\section{Definitions and notation}\label{defsec}

All Hilbert spaces considered in this piece are separable (possibly finite dimensional) and complex, and inner products are linear in the second variable, conjugate linear in the first.  We write $\mathcal{B}(\mathcal{H})$ (respectively, $\mathcal{K}(\mathcal{H})$) for the $C^*$-algebra of bounded (respectively, compact) operators on $\mathcal{H}$. 

If $X$ is a set and $\mathcal{H}$ a Hilbert space, $l^2(X,\mathcal{H})$ denotes the Hilbert space of square summable functions on $X$ with values in $\mathcal{H}$.  $l^2(X)$ denotes $l^2(X,\C)$, and the standard orthonormal basis of this space is denoted $\{\delta_x\}_{x\in X}$.  If $l^2(X)\otimes\h$ is the (completed) Hilbert space tensor product, then there is a natural isomorphism 
$$
l^2(X)\otimes\mathcal{H}\cong l^2(X,\mathcal{H})
$$
defined by sending the elementary tensor $\delta_x\otimes v$ to the function from $X$ to $\mathcal{H}$ defined by
$$
y\mapsto \left\{\begin{array}{ll} v & y=x \\ 0 & \text{otherwise} \end{array}\right.;
$$
 we will often make use of this isomorphism, and the corresponding natural representation of the spatial tensor product $\mathcal{B}(l^2(X))\otimes\mathcal{B}(\mathcal{H})$ on $l^2(X,\mathcal{H})$, without further comment.

If $A$ is a subset of $X$ we write $\chi_A$ for the characteristic function of $A$ considered as a multiplication operator on $l^2(X,\mathcal{H})$.  For $x,y\in X$ we write $e_{xy}$ for the `matrix unit' in $\mathcal{B}(l^2(X,\mathcal{H}))$ defined on elementary tensors by
$$
e_{xy}:\delta_z\otimes v\mapsto \langle \delta_y,\delta_z\rangle \delta_x\otimes v.
$$ 
Similarly, if $x,y\in X$ and $v,w\in \mathcal{H}$, we write $e_{(x,v),(y,w)}\in\mathcal{B}(l^2(X,\mathcal{H}))$ for the rank one `matrix unit' defined by
\begin{equation}\label{mu}
e_{(x,v),(y,w)}:\delta_z\otimes u \mapsto \langle \delta_y\otimes w,\delta_z\otimes u\rangle\delta_x\otimes v.
\end{equation}
If $T$ is a bounded operator from $l^2(Y,\h)$ to $l^2(X,\h)$, $x\in X$ and $y\in Y$, we write 
$$
T_{xy}:=e_{xx}Te_{yy},
$$
for the `$(x,y)^\text{th}$ matrix coefficient of $T$', which naturally identifies with a bounded operator on $\mathcal{H}$.  

\begin{roealg2}\label{roealg2}
Let $X$ be a space and $\h$ a separable (possibly finite dimensional) Hilbert space.  Let  
$$T=(T_{xy})_{x,y\in X}$$
be a bounded operator on $l^2(X,\h)$.  The operator $T$ is said to have \emph{finite propagation} if there exists $S>0$ such that if $d(x,y)\geq S$ then $T_{xy}=0$.  The operator $T$ is said to be \emph{locally compact} if each $T_{xy}$ is a compact operator on $\h$.

A $*$-subalgebra of $\mathcal{B}(l^2(X,\h))$ is called an \emph{algebraic Roe-type algebra} if
\begin{itemize}
\item it consists only of locally compact, finite propagation operators;
\item it contains any finite propagation operator $T$ such that there exists $N\in\N$ such that the rank of $T_{xy}$ is at most $N$ for all $x,y\in X$.
\end{itemize}
We denote algebraic Roe-type algebras by $A[X]$\footnote{Note that there are many non-isomorphic algebraic Roe-type algebras associated to $X$, so this notation is slightly abusive.}.

A $C^*$-subalgebra of $\mathcal{B}(l^2(X,\h))$ is called a \emph{Roe-type algebra} if it is the norm closure of some algebraic Roe-type algebra.  We denote Roe-type algebras by $A^*(X)$.
\end{roealg2}

The following gives some examples of Roe-type algebras, and one related example that does not quite satisfy the conditions in Definition \ref{roealg2} above; all have applications in coarse geometry and index theory.

\begin{roeex}\label{roeex}
Let $X$ be a space and $\h$ be a separable Hilbert space.  Consider the following $*$-algebras of operators on $l^2(X,\h)$.
\begin{itemize}
\item say $\h=\C$ and let $\C_u[X]$ consist of all finite propagation operators on $l^2(X,\mathcal{H})=l^2(X)$;
\item say $\h$ is infinite dimensional and let $\C_s[X]$ consists of all finite propagation operators $T$ on $l^2(X,\h)$ such that there exists a finite dimensional subspace $\h_T\subseteq \h$ such that for all $x,y\in X$, $T_{xy}\in\mathcal{B}(\h_T)$;
\item say $\h$ is infinite dimensional and let $U\C[X]$ consist of all finite propagation operators $T$ such that there exists $N\in\N$ such that for all $x,y\in X$, $T_{xy}$ is an operator of rank at most $N$;
\item say $\h$ is infinite dimensional and let $\C[X]$ consist of finite propagation operators $T$ such that for all $x,y\in X$, $T_{xy}$ is in $\mathcal{K}(\h)$.
\end{itemize}
The operator norm completions of these four $*$-algebras are denoted $C^*_u(X)$, $C_s^*(X)$, $UC^*(X)$ and $C^*(X)$ respectively, and are called the \emph{uniform Roe algebra of $X$}, \emph{stable uniform Roe algebra of $X$}, \emph{uniform algebra of $X$}, and \emph{Roe algebra of $X$}, respectively.   

The $*$-algebras $\C_u[X]$, $U\C[X]$ and $\C[X]$ are all algebraic Roe-type algebras, and the corresponding completions are all Roe-type algebras.

The algebras $\C_s[X]$ are not algebraic Roe-type algebras, but they are close enough that the techniques in this paper will still apply with some minor elaborations.
\end{roeex}

\begin{roerem}\label{roerem}
\begin{enumerate}
\item It is not hard to see that $C_s^*(X)$ is canonically isomorphic to $C^*_u(X)\otimes\mathcal{K}(\h)$, whence its name.
\item Clearly one has inclusions
$$
C^*_u(X)\subseteq C^*_s(X)\subseteq UC^*(X)\subseteq C^*(X),
$$
the last two of which are canonical, given the choice of an infinite dimensional Hilbert space.  In general, none of these inclusions are equalities, and moreover none of these four $C^*$-algebras associated to $X$ is abstractly isomorphic to any of the others.
\item It will be important in what follows that any algebraic Roe-type algebra associated to a space $X$ contains all the rank one operators $e_{(x,v),(y,w)}$, where $x,y\in X$ and $v,w\in\h$.  It follows in particular that any Roe-type algebra  associated to $X$ contains the compact operators $\mathcal{K}(l^2(X,\h))$ as a $C^*$-subalgebra.  The analogous facts are also true for $\C_s[X]$ and $C^*_s(X)$. 
\item The $C^*$-algebras $C_s^*(X)$, $UC^*(X)$ and $C^*(X)$ are all coarse invariants, in the sense that if $X$ and $Y$ are coarsely equivalent spaces, then $C_s^*(X)\cong C^*_s(Y)$, $UC^*(X)\cong UC^*(Y)$ and $C^*(X)\cong C^*(Y)$.  Similar statements hold on the purely algebraic level, but not for $C^*_u(X)$ as one can see by considering finite spaces: in this case coarse equivalence of $X$ and $Y$ implies Morita equivalence of $C^*_u(X)$ and $C^*_u(Y)$  \cite[Theorem 4]{Brodzki:2007mi}.
\item The Roe-type algebras $C^*_s(X)$, $UC^*(X)$ and $C^*(X)$ can be made sense of for metric spaces that are coarsely equivalent to bounded geometry (discrete) spaces, rather than metric spaces that actually have these properties themselves.  The analogues of Theorem \ref{main1} still hold in this context.
\end{enumerate}
\end{roerem}

\section{Analytic Preliminaries}\label{apsec}

The following lemma was pointed out to us by Christian Voigt.  It says roughly that any $*$-isomorphism of (algebraic) Roe-type algebras is spatially implemented; the statement is slightly more technical than this to allow it to apply to both the algebraic and $C^*$-algebraic cases directly.  

\begin{spatiallem}\label{spatiallem}
Let $X$ and $Y$ be spaces.  Let $A[X]$ denote either an algebraic Roe-type algebra or $\C_s[X]$, and $A^*(Y)$ denote either a Roe type algebra or $\C_s[Y]$.  Assume these algebras are represented on Hilbert spaces $l^2(X,\h_X)$ and $l^2(Y,\h_Y)$ respectively.  Let 
$$
\phi:A[X]\to A^*(Y)
$$
be a $*$-homomorphism that is a $*$-isomorphism onto a dense $*$-subalgebra of $A^*(Y)$. 

Then there exists a unitary isomorphism $U:l^2(X,\h_X)\to l^2(Y,\h_Y)$ spatially implementing $\phi$ in the sense that
$$
\phi(T)=UTU^*
$$
for all $T\in A[X]$.  

In particular, $\phi$ extends to a $*$-isomorphism from $A^*(X)$ to $A^*(Y)$ that is continuous for the strong topology.
\end{spatiallem}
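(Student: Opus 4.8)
The plan is to exploit the fact, recorded in Remark \ref{roerem}, that any algebraic Roe-type algebra (and $\C_s[X]$) contains every rank-one operator $e_{(x,v),(y,w)}$. Fix an orthonormal basis $\{\xi_j\}_{j\in J}$ of $l^2(X,\h_X)$ consisting of vectors of the form $\delta_x\otimes v$, with $v$ ranging over an orthonormal basis of $\h_X$. Writing $v_{jk}=|\xi_j\rangle\langle\xi_k|$ for the associated matrix units and $p_j=v_{jj}$ for the rank-one basis projections, all of these lie in $A[X]$; their span is the ideal $\mathcal{F}_X$ of finite-rank operators on $l^2(X,\h_X)$, and $\sum_j p_j=1$ strongly. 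The strategy is to produce a unitary $U$ out of the images $\phi(v_{jk})$ and then check it implements $\phi$ everywhere.

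First I would show that $\phi$ sends rank-one projections to rank-one projections. Each $p_j$ is \emph{minimal}, in the sense that $p_jA[X]p_j=\C p_j$, because $p_jTp_j=\langle\xi_j,T\xi_j\rangle p_j$. Applying $\phi$ and using that $\phi(A[X])$ is dense, one gets $q_jA^*(Y)q_j=\C q_j$ for $q_j:=\phi(p_j)$; since $\K(l^2(Y,\h_Y))\subseteq A^*(Y)$ and $q_j\K(l^2(Y,\h_Y))q_j=\K(q_j\,l^2(Y,\h_Y))$, this forces $q_j$ to have rank one. Setting $w_{jk}=\phi(v_{jk})$, the matrix-unit relations $w_{jk}w_{lm}=\delta_{kl}w_{jm}$ and $w_{jk}^*=w_{kj}$ persist, and a standard phase normalization (fix $\eta_{j_0}$ in the range of $q_{j_0}$ and put $\eta_j=w_{jj_0}\eta_{j_0}$) yields unit vectors with $w_{jk}=|\eta_j\rangle\langle\eta_k|$; they are orthonormal because $q_jq_k=\phi(p_jp_k)=0$ for $j\ne k$.

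The crux — and the one step that genuinely uses surjectivity (density of the image) rather than mere injectivity — is to show that $\{\eta_j\}$ is a \emph{complete} orthonormal basis, equivalently that $Q:=\bigvee_j q_j=1$. I would argue by contradiction: suppose $\zeta$ is a unit vector with $\zeta\perp\eta_j$ for all $j$. Because $\mathcal{F}_X$ is a two-sided ideal, $p_jT\in\mathcal{F}_X$ for every $T\in A[X]$, so $q_j\phi(T)\zeta=\phi(p_jT)\zeta=0$ (each $w_{lm}$ kills $\zeta$). Hence $\phi(T)\zeta\perp\eta_j$ for all $j$ and all $T$, so the cyclic subspace $\overline{\phi(A[X])\zeta}=\overline{A^*(Y)\zeta}$ is contained in $Q^\perp\,l^2(Y,\h_Y)$. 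But this subspace is invariant under $\K(l^2(Y,\h_Y))\subseteq A^*(Y)$ and contains $\zeta\ne0$, so by irreducibility of the compacts it is all of $l^2(Y,\h_Y)$; since $\phi$ is injective we have $q_1\ne0$, hence $Q\ne0$, giving a contradiction. Therefore $Q=1$. I expect this completeness argument to be the main obstacle, precisely because it is where the dense-range hypothesis and the irreducibility of the compacts inside $A^*(Y)$ are indispensable.

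Finally I would define the unitary $U:l^2(X,\h_X)\to l^2(Y,\h_Y)$ by $U\xi_j=\eta_j$, which is onto by completeness. For $T\in A[X]$ one compares matrix coefficients: on one hand $q_j\phi(T)q_k=\phi(p_jTp_k)=\langle\xi_j,T\xi_k\rangle\,w_{jk}$, and on the other $q_j(UTU^*)q_k=\langle\eta_j,UTU^*\eta_k\rangle\,w_{jk}=\langle\xi_j,T\xi_k\rangle\,w_{jk}$, so $\langle\eta_j,\phi(T)\eta_k\rangle=\langle\eta_j,UTU^*\eta_k\rangle$ for all $j,k$. As $\{\eta_j\}$ is a complete orthonormal basis, this gives $\phi(T)=UTU^*$ on all of $A[X]$. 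The map $\mathrm{Ad}(U)$ is then a norm- and strongly-continuous $*$-isomorphism of $\B(l^2(X,\h_X))$ onto $\B(l^2(Y,\h_Y))$ carrying $A[X]$ onto $\phi(A[X])$; passing to norm closures it carries $A^*(X)$ onto $A^*(Y)$, which furnishes the claimed strongly continuous extension.
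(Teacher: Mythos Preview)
Your overall strategy---building the unitary directly from the matrix units rather than citing a spatial-implementation theorem for the compacts---is sound and more hands-on than the paper's route, but the completeness step contains a genuine gap. You assert that $\mathcal{F}_X$, the algebraic span of the $v_{lm}$, is a two-sided ideal in $A[X]$, so that $p_jT\in\mathcal{F}_X$ and hence $\phi(p_jT)$ is a finite linear combination of the $w_{lm}$, each of which annihilates $\zeta$. This fails whenever $\h_X$ is infinite-dimensional: writing $\xi_j=\delta_x\otimes v$, the vector $T^*\xi_j$ is supported on only finitely many points of $X$ (by finite propagation and bounded geometry), but at each such point its $\h_X$-component $(T_{xy})^*v$ may have infinitely many nonzero coefficients in the chosen basis of $\h_X$. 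Thus $p_jT=|\xi_j\rangle\langle T^*\xi_j|$ is rank one but generally lies only in the \emph{norm closure} of $\mathcal{F}_X$, and you have not yet extended $\phi$ continuously to that closure. (Your sentence ``their span is the ideal $\mathcal{F}_X$ of finite-rank operators'' already conflates the span with its closure.)

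The repair is short. One computes directly that
$\|\phi(T)\eta_j\|^2=\langle\eta_j,\phi(p_jT^*Tp_j)\eta_j\rangle=\|T\xi_j\|^2$
from the scalar relation $p_jT^*Tp_j=\|T\xi_j\|^2\,p_j$, while Parseval on $l^2(X,\h_X)$ gives
$\|Q\,\phi(T)\eta_j\|^2=\sum_k|\langle\eta_k,\phi(T)\eta_j\rangle|^2=\sum_k|\langle\xi_k,T\xi_j\rangle|^2=\|T\xi_j\|^2$;
hence $\phi(T)\eta_j\in\mathrm{range}(Q)$ for every $T$ and $j$. Applying this with $T^*$ in place of $T$ yields $\langle\eta_j,\phi(T)\zeta\rangle=\langle\phi(T^*)\eta_j,\zeta\rangle=0$, and your irreducibility argument then goes through. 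By contrast, the paper sidesteps the issue entirely: it first extends $\phi$ to a $*$-isomorphism $\K(l^2(X,\h_X))\to\K(l^2(Y,\h_Y))$ (using that the matrix-unit span has a unique $C^*$-norm, together with a short ideal argument identifying the image), and then invokes the classical fact that any $*$-isomorphism between algebras of compact operators is spatially implemented, after which the matrix-coefficient check you give finishes the proof.
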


\begin{proof}
For simplicity of notation in what follows, write $\K_X$, $\K_Y$ for $\K(l^2(X,\h_X))$, $\K(l^2(Y,\h_Y))$ respectively. 

Let $\{v_n\}_{n\in\N}$ be an orthonormal basis of $\h_X$, and consider the collection $\{e_{(x,v_n),(z,v_m)}\}_{x,z\in X,n,m\in\N}$ of matrix units in $\K_X$, which are also in $A[X]$ (cf.\ part 3 of Remark \ref{roerem}).  Let $I$ denote the $*$-algebra spanned by these matrix units, which has closure $\K_X$; moreover, it follows from the fact that $I$ is an increasing union of matrix algebras that $\K_X$ is the unique $C^*$-completion of $I$.  Hence  the norm $\phi(I)$ inherits from $A^*(Y)$ is the same as the $\K_X$-norm, and thus $\phi$ extends (uniquely) to a $*$-homomorphism
$$
\phi:A[X]+\mathcal{K}_X\to A^*(Y)
$$
which is of course still a $*$-isomorphism onto a dense $*$-subalgebra.

Now, it follows from density of $\phi(A[X])$ in $A^*(Y)$ that $\phi(\K_X)$ is an ideal in $A^*(Y)$, and therefore contains $\K_Y$ as an ideal; however, $\phi(\K_X)$ is simple (as $\K_X$ is), whence $\phi(\K_X)=\K_Y$, i.e.\ $\phi$ restricts to a $*$-isomorphism from $\K_X$ to $\K_Y$.  Any such $*$-isomorphism is spatially implemented (see for example \cite[Corollary 4.1.8]{Dixmier:1977vl}), whence there exists a unitary isomorphism $U:l^2(X,\h_X)\to l^2(Y,\h_Y)$ such that
\begin{equation}\label{spaimp}
\phi(K)=UKU^* ~\text{ for all }~ K\in\mathcal{K}_X.
\end{equation}

Note now that for any $T\in A[X]$ and any $x,z\in X$, $n,m\in \N$ we have that 
\begin{align*}
e_{(x,v_n),(x,v_n)}U^*\phi(T)Ue_{(z,v_m),(z,v_m)} & = U^*Ue_{(x,v_n),(x,v_n)}U^*\phi(T)Ue_{(z,v_m),(z,v_m)}U^*U \\ &=U^*\phi(e_{(x,v_n),(x,v_n)})\phi(T)\phi(e_{(z,v_m),(z,v_m)})U \\ & =U^*\phi(e_{(x,v_n),(x,v_n)}Te_{(z,v_m),(z,v_m)})U \\& =e_{(x,v_n),(x,v_n)}Te_{(z,v_m),(z,v_m)},
\end{align*}
where the last equality follows from line \eqref{spaimp}.  This says that $U^*\phi(T)U$ and $T$ have the same matrix coefficients, whence 
$$
\phi(T)=UTU^*
$$
for all $T\in A[X]$.  It follows that $\phi$ is continuous for both the norm and strong topologies, and thus extends to a $*$-isomorphism from $A^*(X)$ to $A^*(Y)$.  
\end{proof}

The next lemma, and the similar Lemma \ref{jlem2}, are the most important tools in proving our main results.

\begin{jlem}\label{jlem}
Let $X$ be a space, and $A^*(X)$ denote either a Roe-type algebra associated to $X$ or $C^*_s(X)$.

Let $(T_n)_{n\in \N}$ be a sequence of mutually orthogonal operators in $A^*(X)$, and assume that for any $\mathcal{N}\subseteq \N$, the sum $\sum_{n\in \mathcal{N}}T_n$ converges strongly to some operator in $A^*(X)$.

Then for any $\epsilon>0$ there exists $S\geq 0$ such that for any subsets $A$, $B$ of $X$ with  $d(A,B)>S$, and any $n$, we have that
$$
\|\chi_AT_n\chi_B\|<\epsilon.
$$
\end{jlem}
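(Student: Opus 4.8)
The plan is to argue by contradiction, exploiting the fact that every element $S\in A^*(X)$ is \emph{quasi-local}: being a norm-limit of finite-propagation operators, for any $\delta>0$ there is a finite-propagation $V$ with $\|S-V\|<\delta$, and since $\chi_AV\chi_B=0$ once $d(A,B)>\pr(V)$, we get $\|\chi_AS\chi_B\|<\delta$ whenever $A,B$ are far enough apart. In particular each $T_n$ is quasi-local, and so is $T_\mathcal{N}:=\sum_{n\in\mathcal{N}}T_n\in A^*(X)$ for \emph{every} subset $\mathcal{N}$. Suppose the conclusion fails for some $\epsilon>0$, so that for every $S$ there are $n,A,B$ with $d(A,B)>S$ and $\|\chi_AT_n\chi_B\|\ge\epsilon$. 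Because each fixed $T_n$ is quasi-local, the set of indices ``bad at scale $S$'' is infinite for every $S$: if it were finite for some $S_0$, quasi-locality of those finitely many operators would empty the bad set at all larger scales, contradicting the failure. I also note that mutual orthogonality gives $\|T_n\|\le\|T_\N\|=:C<\infty$ uniformly, where $\N$ is the full index set, since orthogonality of ranges yields $\|T_\N\xi\|^2=\sum_m\|T_m\xi\|^2\ge\|T_n\xi\|^2$.

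I would then greedily build an infinite index set $\mathcal{N}=\{n_1<n_2<\cdots\}$ with window pairs $(A_j,B_j)$ and witnessing unit vectors $\eta_j=\chi_{A_j}\eta_j$, $\xi_j=\chi_{B_j}\xi_j$. At stage $j$, having fixed $T_{n_1},\dots,T_{n_{j-1}}$, I first choose a scale $S_j\ge j$ so large that, by quasi-locality of these finitely many earlier operators, $\sum_{k<j}\|\chi_AT_{n_k}\chi_B\|<\epsilon/4$ holds for \emph{all} $A,B$ with $d(A,B)>S_j$. I then pick $n_j$ from the infinite set of indices bad at scale $S_j$ (also avoiding a finite set $F_j$ specified below), select $A_j,B_j$ with $d(A_j,B_j)>S_j$ and $\|\chi_{A_j}T_{n_j}\chi_{B_j}\|\ge\epsilon$, and choose $\eta_j,\xi_j$ with $\langle\eta_j,T_{n_j}\xi_j\rangle$ real and $\ge\epsilon-\epsilon/4$ (adjusting a phase).

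The crux—and the step I expect to be the main obstacle—is preventing the remaining summands from cancelling this diagonal contribution inside the window, which cannot be read off term-by-term because windowing destroys the orthogonality of the $T_n$. I split $\langle\eta_j,T_\mathcal{N}\xi_j\rangle=\langle\eta_j,T_{n_j}\xi_j\rangle+\sum_{k<j}\langle\eta_j,T_{n_k}\xi_j\rangle+\sum_{l>j}\langle\eta_j,T_{n_l}\xi_j\rangle$. The ``past'' sum is $<\epsilon/4$ directly from the distance choice, since $|\langle\eta_j,T_{n_k}\xi_j\rangle|\le\|\chi_{A_j}T_{n_k}\chi_{B_j}\|$. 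For the ``future'' sum I would use orthogonality in integrated form: for fixed unit $\eta_j,\xi_j$, writing $\langle\eta_j,T_m\xi_j\rangle=\langle T_m^*\eta_j,\xi_j\rangle$ and letting $P_m$ project onto the mutually orthogonal co-ranges containing $T_m^*\eta_j$, one has $\sum_m|\langle\eta_j,T_m\xi_j\rangle|^2\le\sum_m\|T_m^*\eta_j\|^2\|P_m\xi_j\|^2\le C^2\sum_m\|P_m\xi_j\|^2\le C^2$. Hence for each $j$ and each threshold $\tau$, only finitely many $m$ satisfy $|\langle\eta_j,T_m\xi_j\rangle|\ge\tau$. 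I therefore set $F_l:=\bigcup_{j<l}\{m:|\langle\eta_j,T_m\xi_j\rangle|\ge\epsilon2^{-l-2}\}$, which is finite, and require $n_l\notin F_l$ at stage $l$; this forces $|\langle\eta_j,T_{n_l}\xi_j\rangle|<\epsilon2^{-l-2}$ for all $l>j$, so the future sum is bounded by $\sum_{l>j}\epsilon2^{-l-2}\le\epsilon/4$.

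Combining the three estimates gives $\mathrm{Re}\,\langle\eta_j,T_\mathcal{N}\xi_j\rangle\ge(\epsilon-\epsilon/4)-\epsilon/4-\epsilon/4=\epsilon/4$, whence $\|\chi_{A_j}T_\mathcal{N}\chi_{B_j}\|\ge|\langle\eta_j,T_\mathcal{N}\xi_j\rangle|\ge\epsilon/4$ for every $j$, while $d(A_j,B_j)>S_j\ge j\to\infty$. This contradicts the quasi-locality of $T_\mathcal{N}\in A^*(X)$, proving the lemma. The only genuinely delicate point is the control of the future cross terms; everything else is bookkeeping. Note that orthogonality enters exactly to make the forbidden sets $F_l$ finite, so that the greedy avoidance is possible, while quasi-locality is used both to force the bad indices to be infinite and to derive the final contradiction.
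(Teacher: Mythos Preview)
Your argument is correct and takes a genuinely different route from the paper's. Both proceed by contradiction and assemble a bad index set whose associated partial sum lies in $A^*(X)$ yet fails quasi-locality, but the mechanisms for controlling cross terms differ. The paper first reduces to \emph{finite-rank} windows by inserting projections $P_k=1\otimes p_k$ and restricting to finite $A,B$; it then builds \emph{two} index sets, $\mathcal{N}$ (labelling the windows) and $\mathcal{M}$ (the summation set), using at each step a dichotomy---either $\mathcal{M}$ or $\mathcal{M}\cup\{k\}$ already has windowed norm at least $\epsilon/2$---to absorb the past, and the Riemann rearrangement theorem in the finite-dimensional space $\mathcal{B}(\chi_{B_k}P_k\cdot l^2,\chi_{A_k}P_k\cdot l^2)$ to force absolute convergence of the tail and so control the future. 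You instead work with rank-one windows via witness vectors $\eta_j,\xi_j$, use a \emph{single} subset $\mathcal{N}$, handle the past directly by the quasi-locality of the finitely many previously selected $T_{n_k}$, and handle the future via the $\ell^2$ bound $\sum_m|\langle\eta_j,T_m\xi_j\rangle|^2\le C^2$ coming straight from mutual orthogonality of the co-ranges, which makes each avoidance set $F_l$ finite. Your version is arguably more elementary---it bypasses both the finite-dimensional reduction and the rearrangement theorem---while the paper's dichotomy trick and finite-rank cutdown give a template that adapts more visibly to the ``exact'' variant stated immediately afterwards.
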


\begin{proof}
Choose a sequence $(p_k)$ of finite rank projections in $\mathcal{H}$ that converge strongly to the identity, and let 
$$
P_k=1\otimes p_k\in l^2(X)\otimes \h.
$$ 
Choose also sequences of nested finite subsets $(A_k)$, $(B_k)$ of $A$, $B$ respectively such that $A=\cup A_k$ and $B=\cup B_k$.  As for any $n$ we have that
$$
(\chi_{A_k} P_k)T_n(\chi_{B_k} P_k)\stackrel{s.o.t.}{\longrightarrow}\chi_AT_n\chi_B~~ \text{ as }~~ k\to\infty
$$
(this uses that the sequences $(\chi_{A_k}P_k)$ and $(\chi_{B_k} P_k)$ are uniformly bounded), and as limits in the strong operator topology do not increase norms, it suffices to prove the following statement: `for any $\epsilon>0$ there exists $S\geq 0$ such that for any finite subsets $A$, $B$ of $X$ such that $d(A,B)>S$, all $k$ and any $n$, we have that
$$
\|\chi_A P_kT_n\chi_BP_k\|<\epsilon.\text{'}
$$
We will prove this statement.

Assume for contradiction that there is $\epsilon>0$, such that for any $S\geq 0$ there are $S$-separated finite subsets $A_S,B_S\subseteq X$, a projection $P_{k_S}=1\otimes p_{k_S}$ and $i_S$, such that 
$$\|\chi_{A_S}P_{k_S}T_{i_S}\chi_{B_S}P_{k_S}\|\geq \epsilon.$$
Choose any sequence $(S_n)$ tending to infinity, and for simplicity rename $A_{S_n}$, $B_{S_n}$, $P_{k_{S_n}}$ and $i_{S_n}$ as $A_n$, $B_n$, $P_n$ and $n$ respectively.

We will show that there are two subsets $\mathcal{N}, \mathcal{M}\subseteq \N$ such that:
\begin{itemize}
\item  $\lim_{n\in \mathcal{N}}S_n=\infty$ (in other words, $\mathcal{N}$ is infinite); and
\item for all $n\in \mathcal{N}$, 
$$\Big\|\chi_{A_n}P_n\Big(\sum_{m\in \mathcal{M}} T_m\Big)\chi_{B_n}P_n\Big\|\geq\frac\epsilon4.$$
\end{itemize} 
This contradicts the fact that $\sum_{m\in \mathcal{M}} T_m$ is in $A^*(X)$.

We define $\mathcal{N}$ and $\mathcal{M}$ inductively; start with $\mathcal{N}=\mathcal{M}=\emptyset$, and a parameter $k=1$. For the inductive step, we assume that we have `updated' $\mathcal{N}$, $\mathcal{M}$ and $k$.
We declare the updated $\mathcal{N}$ to be $\mathcal{N}\cup\{k\}$. Now, by assumption, 
$$\|\chi_{A_k}P_kT_k\chi_{B_k}P_k\|\geq\epsilon,$$
whence either $\mathcal{M}':=\mathcal{M}$ or $\mathcal{M}':=\mathcal{M}\cup\{k\}$ satisfies 
$$\Big\|\chi_{A_k}P_k\Big(\sum_{m\in \mathcal{M}'}T_m\Big)\chi_{B_k}P_k\Big\|\geq\frac\epsilon2.$$ 
We rename $\mathcal{M}'$ to $\mathcal{M}$. As the operators $T_m$ are mutually orthogonal and the sum $\sum_{m>k}T_m$ converges strongly, the sum
\begin{equation}\label{rr}
\sum_{m> k}\chi_{A_k}P_kT_{\sigma(m)}\chi_{B_k}P_k
\end{equation}
converges strongly for any permutation $\sigma$ of $\{m\in\N~|~m>k\}$; the Riemann rearrangement theorem for the finite dimensional vector space
$$
\mathcal{B}(\chi_{B_k}P_k\cdot(l^2(X,\mathcal{H}))~,~\chi_{A_k}P_k\cdot(l^2(X,\mathcal{H})))
$$
thus implies that the sum in line \eqref{rr} above converges absolutely; in particular, there exists $k'> k$ such that 
$$\sum_{m\geq k'}\|\chi_{A_k}P_kT_m\chi_{B_k}P_k\|\leq \frac\epsilon4.$$ 
Hence  if $\mathcal{M}'$ is the result of adding any number of elements to $\mathcal{M}$, all of which are larger than $k'$, then
 $$\Big\|\chi_{A_k}P_k\Big(\sum_{m\in \mathcal{M'}}T_m\Big)\chi_{B_k}P_k\Big\|\geq \frac\epsilon2-\frac\epsilon4=\frac\epsilon4.$$ 
We declare the new $k$ to be $k'$ and repeat the induction. It is clear that this process yields the required subsets $\mathcal{N}$ and $\mathcal{M}$.
\end{proof}

We will also need the following slightly different version of the above lemma; as the proof is essentially the same, we omit it.

\begin{jlem2}\label{jlem2}
Let $X$ be a space, and let $A[X]$ denote either an algebraic Roe-type algebra associated to $X$, or $C^*_s(X)$.

Let $(T_n)_{n\in \N}$ be a sequence of mutually orthogonal operators in $A[X]$, and assume that for any $\mathcal{N}\subseteq \N$, the sum $\sum_{n\in \mathcal{N}}T_n$ converges strongly to some operator in $A[X]$.

Then there exists $S\geq 0$ such that for any subsets $A$, $B$ of $X$ with  $d(A,B)>S$, and any $n$, we have that
$$
\chi_AT_n\chi_B=0. \eqno \qed
$$
\end{jlem2}

\section{$C^*$-algebraic isomorphisms and coarse equivalences}\label{fadsec}

In this section, we will prove the theorem below.

\begin{fadthe}\label{fadthe}
Say $X$ and $Y$ are spaces with property A, and $A^*(X)$ and $A^*(Y)$ are Roe-type algebras associated to $X$ and $Y$ respectively.

If there exists a $*$-isomorphism $\phi:A^*(X)\to A^*(Y)$, then $X$ and $Y$ are coarsely equivalent.
\end{fadthe}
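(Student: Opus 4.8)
The plan is to reconstruct the coarse geometry directly from a spatial unitary: I would use the spatial-implementation Lemma \ref{spatiallem} to produce it, and the no-cancellation Lemma \ref{jlem}, fed by property A, to control it. Concretely, the first move is to apply Lemma \ref{spatiallem} to replace $\phi$ by conjugation by a single unitary $U:l^2(X,\h_X)\to l^2(Y,\h_Y)$, so that $\phi(T)=UTU^*$ and, symmetrically, $\phi^{-1}(T)=U^*TU$. The problem then reduces to showing that $U$ is, up to a bounded error, supported near the graph of a coarse equivalence $f:X\to Y$. To quantify this I would fix a threshold $t>0$ and, for each $x\in X$, define its \emph{$t$-cloud} to be the set of $y\in Y$ at which the block $\chi_{\{y\}}\,UeU^*\,\chi_{\{y\}}$ has norm at least $t$, where $e$ is a rank-one projection at $x$ (more generally a finite-rank cut-off of the fibre over $x$ when $\h_X$ is infinite dimensional). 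Dually, using $U^*$, one obtains clouds in $X$; a point $f(x)$ would be chosen from the cloud of $x$, and $g(y)$ from the cloud of $y$.

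The core estimate is a uniform bound on cloud diameters, and this is where Lemma \ref{jlem} does the work. For a subset $\mathcal{N}\subseteq\N$ and distinct points $(x_n)$, the projection $\sum_{n\in\mathcal{N}}e_n$ (with $e_n$ the chosen rank-one projection at $x_n$) is a propagation-zero, locally compact operator, hence lies in $A[X]$; so its $\phi$-image lies in $A^*(Y)$, and the hypotheses of Lemma \ref{jlem} are met by $T_n:=\phi(e_n)=Ue_nU^*$. The lemma then supplies, for each $t$, a single $S$ with $\|\chi_A\,Ue_nU^*\,\chi_B\|<t$ whenever $d(A,B)>S$; reading this off the rank-one structure shows the mass of $Ue_nU^*$ cannot split into two $t$-significant pieces more than $S$ apart, so, using the bounded-geometry bound $N_S$, each cloud has diameter controlled by $S$. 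Running the same argument with $U^*$ controls the clouds in $X$, which is what ultimately yields that $f$ and $g$ are uniformly expansive.

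The step I expect to be the main obstacle is precisely the point at which property A becomes indispensable: when $\h_X$ is infinite dimensional the fibre over $x$ is a whole copy of $\h_X$, and a priori $U$ might scatter different unit vectors of this fibre into widely separated clouds, or leave small long-range tails that conspire across many points, so that no single-valued $f$ is coarse. To defeat this I would use the metric sparsification form of property A to peel off a large subset of $X$ that decomposes into uniformly bounded, arbitrarily well-separated clumps; on such a sparse family, a fibre whose images failed to stay together, or a cloud that refused to be bounded, would assemble exactly the strongly convergent orthogonal sum whose tails Lemma \ref{jlem} forbids, producing a contradiction. Finally, since $U^*U=1$ and $UU^*=1$, the cloud of $x$ (computed via $U$) and the cloud of $f(x)$ (computed via $U^*$) must overlap within a bounded distance; this forces $g\circ f$ and $f\circ g$ to be close to the respective identity maps, establishing that $X$ and $Y$ are coarsely equivalent.
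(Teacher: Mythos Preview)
Your overall architecture---spatial implementation via Lemma~\ref{spatiallem}, then ``$t$-clouds'' controlled by Lemma~\ref{jlem}---is exactly right, and matches the paper's Lemma~\ref{cloudbound1}. But you have misplaced the role of property~A, and in doing so left the key step unproved.

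The cloud \emph{diameter} bound needs only Lemma~\ref{jlem}, not property~A: for a fixed threshold $\delta>0$ and a fixed finite-dimensional slice of each fibre, the orthogonal sum of the relevant rank-one operators sits in $A^*(X)$ for any $\mathcal{N}$, and Lemma~\ref{jlem} then forces any two points of the $\delta$-cloud of $x$ to lie within a uniform $S$. So the infinite-dimensionality of $\h_X$ is not the obstacle you describe, and sparsifying $X$ into well-separated clumps is not how the paper proceeds. What is genuinely missing from your sketch is the reason why the $t$-cloud is \emph{non-empty} for some threshold $t=c>0$ independent of $x$. You write ``a point $f(x)$ would be chosen from the cloud of $x$'', but a priori the mass of $\xi:=U(\delta_x\otimes v_0)$ could be spread so thinly over $Y$ that $\sup_y\|\xi(y)\|\to 0$ along a sequence of $x$'s; then no uniform threshold works and no map $f$ can be extracted.

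This is exactly where property~A enters in the paper (Lemma~\ref{fadlem2}), and it is applied to $Y$, not $X$: one uses metric sparsification on the probability measure $y\mapsto\|\xi(y)\|^2$ on $Y$. Lemma~\ref{jlem} gives an $S$ so that the mass of $\xi$ cannot be split into two pieces of size $\geq\epsilon$ at distance $>S$; sparsification then produces a decomposition $\Omega=\sqcup\Omega_i$ with $\mu(\Omega)\geq\kappa$, $\operatorname{diam}(\Omega_i)\leq D$, and $d(\Omega_i,\Omega_j)>S$, and the no-splitting estimate forces some single $\Omega_i$ to carry a definite fraction of the mass. Bounded geometry of $Y$ then gives a single point $f(x)\in\Omega_i$ with $\|\xi(f(x))\|\geq c$. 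Once this concentration lemma is in hand, your outline goes through essentially as the paper's proof does (with one further wrinkle: to show $f\circ g$ is close to the identity one first argues that each $g^{-1}(x)$ is finite, so that the auxiliary vectors $v_y$ needed there span a finite-dimensional $E_x$, and Lemma~\ref{cloudbound1} applies).
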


We will not use property A directly here, but rather the property in the following definition, which was introduced by Chen--Tessera--Wang--Yu \cite[Section 3]{Chen:2008so}.  

\begin{msp}\label{msp}
Let X be a space. X is said to have the \emph{metric sparsification property} if there exists a constant $1 \geq\kappa> 0$ such that for all $S\geq0$ there exists $D\geq0$ such that for every Borel probability measure $\mu$ on $X$ there exists a subset $\Omega\subseteq X$ equipped with a decomposition $\Omega = \sqcup_{i\in I} \Omega_i$ such that:
\begin{itemize}
\item $\mu(\Omega)\geq \kappa$;
\item $\text{diam}(\Omega_i)\leq D$ for all $i$;
\item $d(\Omega_i,\Omega_j)>S$ for all $i \neq j$. 
\end{itemize}
\end{msp}

\begin{amsp}\label{amsp}
Let $X$ be a space.  Then $X$ has property A if and only if it has the metric sparsification property.
\end{amsp}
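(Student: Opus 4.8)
My plan is to establish the equivalence by passing through a third, operator-theoretic property, the \emph{operator norm localization property} (ONL) of Chen--Tessera--Wang--Yu \cite{Chen:2008so}: $X$ has ONL if there is a constant $c\in(0,1]$ such that for every $R>0$ there exists $S>0$ with the property that every operator $T$ on $l^2(X)$ of propagation at most $R$ admits a unit vector $\xi$ with $\di(\su\xi)\le S$ and $\|T\xi\|\ge c\|T\|$. I would then prove the cycle of implications
\[
\text{property A}\ \Longrightarrow\ \text{MSP}\ \Longrightarrow\ \text{ONL}\ \Longrightarrow\ \text{property A},
\]
which yields both directions of the desired equivalence (the first arrow gives one direction, the composite of the last two gives the other). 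Throughout I would use the ``Reiter'' reformulation of property A: $X$ has property A if and only if for all $R,\epsilon>0$ there are $S>0$ and a family $\{\mu_x\}_{x\in X}$ of probability measures on $X$ with $\su(\mu_x)\subseteq B(x,S)$ and $\|\mu_x-\mu_y\|_{1}\le\epsilon$ whenever $d(x,y)\le R$.

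For property A $\Rightarrow$ MSP I would fix the separation parameter $S$, take property A data at scale $R=S$ with $\epsilon$ small and support radius $S_0$, and set $D=2S_0$. Given any Borel probability measure $\mu$, the slowly varying, boundedly supported measures $\{\mu_x\}$ provide a coarse partition-of-unity structure on $X$ at scale $S_0$; a counting argument, using bounded geometry to bound the number of pieces meeting any ball, then lets one collect a fixed proportion $\kappa$ of the $\mu$-mass into clusters of diameter at most $D$ that are pairwise $>S$ separated, the point being that slow variation keeps the mass lost near the cuts under control. For MSP $\Rightarrow$ ONL, given $T$ of propagation $R$ I would choose a near-maximizing unit vector $\eta$ and apply MSP to the measure $|\eta|^2$ with separation $>2R$. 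The resulting pieces $\Omega_i$ have disjoint $R$-neighborhoods, so $T$ does not mix them and the vectors $\{T\chi_{\Omega_i}\eta\}$ are mutually orthogonal; a weighted-average argument then produces a single bounded piece $\Omega_i$ on which $\|T\chi_{\Omega_i}\eta\|/\|\chi_{\Omega_i}\eta\|$ is large, giving the localized vector required for ONL. Extracting a definite constant $c$ this way needs the sparsification proportion $\kappa$ to be pushed close to $1$, which I would arrange by iterating MSP on the residual mass, taking some care to preserve separation between clusters produced in different rounds.

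The main obstacle is the last implication, ONL $\Rightarrow$ property A, which is where bounded geometry is genuinely needed (this is essentially a theorem of Sako). The difficulty is one of quantifiers: ONL is a statement about each individual finite-propagation operator in isolation, whereas property A demands a single family $\{\mu_x\}$ of slowly varying, localized measures that works simultaneously at every point of $X$. To bridge this I would argue by contradiction, assuming property A fails for some fixed $R,\epsilon$ and manufacturing from this failure a sequence of propagation-$R$ operators that localize worse and worse, eventually violating ONL for every constant $c$. Turning a purely combinatorial obstruction to the existence of good measures into an honest operator of controlled norm is delicate, and the extraction of a uniform family (or of the contradicting operator) rests on a weak-$*$/ultralimit compactness argument in which bounded geometry is indispensable: it bounds each ball $B(x,S)$ by $N_S$ points, so that there are only finitely many local patterns to control and a diagonal argument can succeed. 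I expect verifying this step, and in particular matching the constant in ONL against the quantitative failure of property A, to be the hardest part of the proof.
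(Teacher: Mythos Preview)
The paper's proof is purely a citation to \cite{Brodzki:2011fk} and \cite{Sako:2012fk}, so there is no in-paper argument to compare against; your cycle property~A $\Rightarrow$ MSP $\Rightarrow$ ONL $\Rightarrow$ property~A is precisely the route those references (together with \cite{Chen:2008so}) take. Your sketches of the first two implications are accurate in outline---in particular MSP $\Rightarrow$ ONL, including the iteration to push $\kappa$ toward $1$, is the main result of \cite{Chen:2008so}---and you are right that ONL $\Rightarrow$ property~A is Sako's theorem and constitutes the substantial step, with the difficulty located exactly where you place it.
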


\begin{proof}
This follows from \cite[Proposition 3.2 and Theorem 3.8]{Brodzki:2011fk} and \cite[Theorem 4.1]{Sako:2012fk}.
\end{proof}

For the remainder of this section, fix $X, Y$ and $\phi:A^*(X)\to A^*(Y)$ as in the statement of Theorem \ref{fadthe}; using Theorem \ref{amsp}, $X$ and $Y$ have the metric sparsification property.  Note that Lemma \ref{spatiallem} implies that there exists a unitary isomorphism
$
U:l^2(X,\h_X)\to l^2(Y,\h_Y)
$
spatially implementing $\phi$;  we also fix $U$ throughout.  Finally, for the remainder of this section, fix unit vectors $v_0\in\h_X$, $w_0\in \h_Y$.

The following very simple lemma is isolated for ease of reference; see line \eqref{mu} above for notation.

\begin{cloudlem}\label{cloudlem}
For any $x_1,x_2\in X$, $v_1,v_2\in\h_X$, $y_1,y_2\in Y$, and $w_1,w_2\in\h_Y$ we have the formula
\begin{align*}
e_{(y_1,w_1)}(\phi(e_{(x_1,v_1),(x_2,v_2)}))&e_{(y_2,w_2)} \\ &=\langle \delta_{y_1}\otimes w_1,U(\delta_{x_1}\otimes v_1)\rangle\langle U(\delta_{x_2}\otimes v_2),\delta_{y_2}\otimes w_2\rangle.
\end{align*}
\end{cloudlem}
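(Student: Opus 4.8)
The plan is to strip the $C^*$-algebra out of the statement immediately using the spatial implementation of $\phi$, and then verify what remains by a one-line rank-one computation. The matrix unit $e_{(x_1,v_1),(x_2,v_2)}$ belongs to $A[X]$ by part 3 of Remark \ref{roerem} (and is in any case compact, hence in $A^*(X)$), so Lemma \ref{spatiallem} applies and yields
\[
\phi(e_{(x_1,v_1),(x_2,v_2)}) = U\,e_{(x_1,v_1),(x_2,v_2)}\,U^*.
\]
Once this substitution is made, no analytic input remains: the identity to be proved is purely a statement about a single rank-one operator conjugated by the unitary $U$ and paired against the vectors $\delta_{y_1}\otimes w_1$ and $\delta_{y_2}\otimes w_2$.

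Next I would unwind the rank-one operator. By its definition in line \eqref{mu}, $e_{(x_1,v_1),(x_2,v_2)}$ sends a vector $\xi$ to $\langle \delta_{x_2}\otimes v_2,\xi\rangle\,(\delta_{x_1}\otimes v_1)$. Conjugating by $U$ and applying the adjoint relation $\langle \delta_{x_2}\otimes v_2, U^*\xi\rangle = \langle U(\delta_{x_2}\otimes v_2),\xi\rangle$ shows that $U e_{(x_1,v_1),(x_2,v_2)} U^*$ is the rank-one operator $\xi\mapsto \langle U(\delta_{x_2}\otimes v_2),\xi\rangle\, U(\delta_{x_1}\otimes v_1)$. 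Reading the left-hand side $e_{(y_1,w_1)}(\cdot)e_{(y_2,w_2)}$ as the (scalar) matrix coefficient $\langle \delta_{y_1}\otimes w_1,(\cdot)(\delta_{y_2}\otimes w_2)\rangle$, consistent with the $T_{xy}=e_{xx}Te_{yy}$ convention of Section \ref{defsec}, I would feed $\delta_{y_2}\otimes w_2$ into this operator and pair the result with $\delta_{y_1}\otimes w_1$. The scalar $\langle U(\delta_{x_2}\otimes v_2),\delta_{y_2}\otimes w_2\rangle$ factors out, and the remaining pairing is $\langle \delta_{y_1}\otimes w_1, U(\delta_{x_1}\otimes v_1)\rangle$, giving exactly the product of inner products on the right-hand side.

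Since this lemma is deliberately elementary and is isolated only for later reference, there is no genuine obstacle. The single point requiring care is the bookkeeping with the adjoint $U^*$ together with the convention (fixed at the start of Section \ref{defsec}) that inner products are conjugate linear in the first variable and linear in the second, so that each scalar lands in the correct slot and no spurious complex conjugates are introduced. Everything else is a direct expansion; note that only the adjoint relation $\langle Ua,\xi\rangle=\langle a,U^*\xi\rangle$ is used in the computation itself, with the full unitarity of $U$ entering only through the invocation of Lemma \ref{spatiallem}.
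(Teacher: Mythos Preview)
Your proposal is correct and follows essentially the same approach as the paper: both use the spatial implementation $\phi(T)=UTU^*$ from Lemma \ref{spatiallem}, then expand the rank-one operator and read off the scalar matrix coefficient via the adjoint relation. The only cosmetic difference is the order of operations---you first recognise $Ue_{(x_1,v_1),(x_2,v_2)}U^*$ as the rank-one operator on $U(\delta_{x_1}\otimes v_1)$, $U(\delta_{x_2}\otimes v_2)$ and then pair, whereas the paper pairs first and then moves $U$ across---but this is the same computation rearranged.
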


\begin{proof}
Computing,
\begin{align*}
e_{(y_1,w_1)}(\phi(e_{(x_1,v_1),(x_2,v_2)}))&e_{(y_2,w_2)} \\
 & =\langle\delta_{y_1}\otimes w_1,\phi(e_{(x_1,v_1),(x_2,v_2)})(\delta_{y_2} \otimes w_2)\rangle \\ 
 & =\langle U^*(\delta_{y_1}\otimes w_1),e_{(x_1,v_1),(x_2,v_2)}U^*(\delta_{y_2}\otimes w_2)\rangle \\ 
 & =\langle U^*(\delta_{y_1}\otimes w_1),\langle \delta_{x_2}\otimes v_2,U^*(\delta_{y_2}\otimes w_2)\rangle\delta_{x_1}\otimes v_1\rangle \\ 
 & =\langle \delta_{x_2}\otimes v_2,U^*(\delta_{y_2}\otimes w_2)\rangle \langle U^*(\delta_{y_1}\otimes w_1), \delta_{x_1}\otimes v_1\rangle \\
 & =\langle U(\delta_{x_2}\otimes v_2),\delta_{y_2}\otimes w_2\rangle \langle \delta_{y_1}\otimes w_1, U(\delta_{x_1}\otimes v_1)\rangle
\end{align*} 
as required.
\end{proof}

\begin{cloudbound1}\label{cloudbound1}
\begin{enumerate}
\item Let $\delta>0$, and for each $x\in X$, fix a finite dimensional subspace $E_x$ of $\h_X$.  Then there exists $S\geq 0$ such that if $x\in X$,  unit vectors $v_1,v_2\in E_x$, $y_1,y_2\in Y$ and unit vectors $w_1,w_2\in \h_Y$ are such that
$$
|\langle U(\delta_x\otimes v_1),\delta_{y_1}\otimes w_1\rangle|\geq \delta~~\text{ and }~~|\langle U(\delta_x\otimes v_2),\delta_{y_2}\otimes w_2\rangle|\geq \delta,
$$
then $d(y_1,y_2)\leq S$.
\item Let $\delta>0$, and for each $x\in X$, fix a finite dimensional subspace $E_x$ of $\h_X$.  Then for all $R\geq 0$ there exists $S\geq0$ such that if such that if $x_1,x_2\in X$,  unit vectors $v_1\in E_{x_1}$, $v_2\in E_{x_2}$, $y_1,y_2\in Y$ and unit vectors $w_1,w_2\in \h_Y$ are such that
$$
|\langle U(\delta_{x_1}\otimes v_1),\delta_{y_1}\otimes w_1\rangle|\geq \delta~~\text{ and }~~|\langle U(\delta_{x_2}\otimes v_2),\delta_{y_2}\otimes w_2\rangle|\geq \delta,
$$
then $d(y_1,y_2)\leq S$.
\end{enumerate}
The same properties hold with the roles of $X$ and $Y$ reversed, and with $U$ replaced by $U^*$.
\end{cloudbound1}

\begin{proof}
The fact that the same properties hold with the roles of $X$ and $Y$ interchanged follows by symmetry.  Moreover, the first of these properties is a special case of the second.  It thus suffices to prove the second property in the form stated.

Assume for contradiction that this is false.  Then there exists $R\geq 0$ and sequences $(x_1^n)$, $(x^n_2)$, $(v_1^n)$, $(v_2^n)$, $(y^n_1)$, $(y^n_2)$, $(w_1^n)$, $(w_2^n)$ such that $d(x^n_1,x^n_2)\leq R$ for all $n$, so that $v_i^n\in E_{x_i^n}$ for all $n$ and $i=1,2$, so that 
\begin{equation}\label{lowbound}
|\langle U(\delta_{x_1^n}\otimes v_1^n),\delta_{y_1^n}\otimes w_1^n\rangle|\geq\delta~~ \text{ and }~~|\langle U(\delta_{x_2^n}\otimes v_2^n),\delta_{y_2^n}\otimes w_2^n\rangle|\geq \delta,
\end{equation}
and so that $d(y_1^n,y_2^n)\to \infty$ as $n\to\infty$.  Note that at least one of the sequences $(y_i^n)$, $i=1,2$ must have a subsequence tending to infinity in $Y$; say without loss of generality this is $(y_1^n)$ and passing to a subsequence, assume $(y_1^n)$ itself tends to infinity.   It follows that the sequence $(\delta_{y_1^n}\otimes w_1^n)$ of unit vectors in $l^2(Y)\otimes \mathcal{H}_Y$ tends weakly to zero; thus for the lower bounds in line \eqref{lowbound} to be possible, $(\delta_{x_1^n}\otimes v_1^n)$ must eventually leave any (norm) compact subset of $l^2(X)\otimes \h_X$;  finite dimensionality of each subspace $E_{x_1^n}$ thus forces $(x_1^n)$ to tend to infinity in $X$.  Passing to another subsequence, we may assume that $d(x_1^n,x_1^{n+1})>2R$ for all $n$; this and the fact that $d(x_1^n,x_2^n)\leq R$ for all $n$ implies in particular that all of the elements $x^n_1$, $x_2^n$ of $X$ are distinct.  

Now, distinctness of the elements $x_1^n$, $x_2^n$ implies that the operators $e_{(x_1^n,v_1^n),(x_2^n,v_2^n)}$ are mutually orthogonal and so for any $\mathcal{N}\subseteq \N$ the sum
$$
\sum_{n\in \mathcal{N}}e_{(x_1^n,v_1^n),(x_2^n,v_2^n)}
$$
converges strongly to a bounded operator on $l^2(X)\otimes\h_X$, that is moreover in $A^*(X)$\footnote{This need not be true in $C^*_s(X)$, which is the reason for treating that case separately.}.  Hence the same is true of the sums 
$$
\sum_{n\in \mathcal{N}}\phi(e_{(x_1^n,v_1^n),(x_2^n,v_2^n)}),
$$
in $A^*(Y)$, using strong continuity of $\phi$ (which follows from Lemma \ref{spatiallem}).
Lemma \ref{jlem} thus implies that there exists $S>0$ such that for any $A,B\subseteq Y$ with $d(A,B)>S$ we have that
$$
\|\chi_A\phi(e_{(x_1^n,v_1^n),(x_2^n,v_2^n)})\chi_B\|<\delta^2.
$$
Set $A_n=\{y^n_1\}$, $B_n=\{y^n_2\}$.  Then we have that for all $n$ so large that $d(y^n_1,y^n_2)>S$
\begin{align*}
\|\chi_{A_n}\phi(e_{(x_1^n,v_1^n),(x_2^n,v_2^n)}) & \chi_{B_n}\| \geq \|e_{(y_1^n,w_1^n)}\phi(e_{(x_1^n,v_1^n),(x_2^n,v_2^n)})e_{(y^n_2,w_2^n)}\| \\
& =|\langle \delta_{y_1^n}\otimes w_1^n,U(\delta_{x_1^n}\otimes v_1^n)\rangle \langle U(\delta_{x^n_2}\otimes v_2^n),\delta_{y^n_2}\otimes w^n_2\rangle|,
\end{align*}
where the last equality uses Lemma \ref{cloudlem}.  This is greater than  $\delta^2$ by assumption, however, which is a contradiction.
\end{proof}

The following analytic lemma will be our only use of the metric sparsification property.

\begin{fadlem2}\label{fadlem2}
There exists $c>0$ such that for each $x\in X$ there exists $f(x)\in Y$ and a unit vector $w_x\in \h_X$ such that 
$$
|\langle U(\delta_x\otimes v_0), \delta_{f(x)}\otimes w_x\rangle| \geq c.
$$
The same property holds with the roles of $X$ and $Y$ reversed, $U$ replaced by $U^*$ and $v_0$ replaced by $w_0$.
\end{fadlem2}

\begin{proof}
Let $\kappa$ be as in Definition \ref{msp}. Let $\epsilon=\kappa/4$ and $t=\kappa/5$, so in particular we have that
\begin{equation}\label{teps}
t+\epsilon<\frac{\kappa}{2}.
\end{equation}
Applying Lemma \ref{jlem} to the sum
$$
\sum_{x\in X}\phi(e_{(x,v_0),(x,v_0)}) \in A^*(X)
$$ 
(similarly to the proof of Lemma \ref{cloudbound1} above) implies that there exists $S>0$ so that for any $x\in X$ and $A,B\subseteq Y$ with $d(A,B)> S$ we have 
\begin{equation}\label{bound1}
\|\chi_A \phi(e_{(x,v_0),(x,v_0)})\chi_B\|<\epsilon.
\end{equation}
For the remainder of the proof, fix $x\in X$.  Define $\xi=U(\delta_x\otimes v_0)$, which we think of as a function from $Y$ to $\h$.  Note that $\chi_A\phi(e_{(x,v_0),(x,v_0)})\chi_B$ is equal to the rank one operator on $l^2(Y,\h_Y)$ defined by
$$
\eta\mapsto \langle \xi|_B ,\eta\rangle\xi|_A
$$
(where `$|_C$' denotes restriction to a subset $C\subseteq Y$), whence line \eqref{bound1} above implies that
\begin{equation}\label{fadinq}
\|\xi|_A\|\|\xi|_B\|<\epsilon.
\end{equation}

Now, using the metric sparsification property for $X$ applied to the Borel measure (in the current context, just a non-negative function with total mass one)
\begin{align*}
\mu:X & \to [0,1] \\
x & \mapsto \|\xi(x)\|^2 
\end{align*}
there exists a subset $\Omega=\sqcup_{i\in I} \Omega_i$ of $X$ and $D\geq 0$ such that $\text{diam}(\Omega_i)\leq D$ for all $i$, $d(\Omega_i,\Omega_j)> S$ for all $i\neq j$ and   
$$
\|\xi|_\Omega\|^2\geq\kappa.
$$

Assume for contradiction that $\|\xi|_{\Omega_i}\|^2<t$ for all $i$.  Then there exists a partition $I=I_1\sqcup I_2$ such that
$$
\sum_{i\in I_1}\|\xi|_{\Omega_i}\|^2\geq \frac{\kappa}{2}-t~~ \text{ and }~~\sum_{i\in I_2}\|\xi|_{\Omega_i}\|^2\geq\frac{\kappa}{2}-t. 
$$
Taking $A=\sqcup_{i\in I_1}\Omega_i$ and $B=\sqcup_{i\in I_2}\Omega_i$, line \eqref{fadinq} above implies that
$$
\frac{\kappa}{2}-t\leq\|\xi|_A\|\|\xi|_B\|<\epsilon,
$$ 
which contradicts line \eqref{teps} above.

Hence $\|\xi|_{\Omega_i}\|^2\geq t$ for some $i$.  Using bounded geometry and uniform boundedness of the sets $\Omega_i$, there is some $M\in\N$ (that depends only on $S$) such that $|\Omega_i|\leq M$ for all $i$.  Hence there is $y\in \Omega_i$ such that
$$
\|\xi(y)\|^2\geq t/M. 
$$
The lemma is then true for $f(x)$ equal to this $y$,  $w_x=\xi(y)/\|\xi(y)\|$, and $c=\sqrt{t/M}$.

The fact that the same result holds for $Y$ follows by symmetry.
\end{proof}

\begin{proof}[Proof of Theorem \ref{fadthe}]
Let $c>0$ have the property in Lemma \ref{fadlem2} both as stated, and with the roles of $X$ and $Y$ reversed.  Lemma \ref{fadlem2} implies that for each $x\in X$, there exists an element $f(x)$ of $Y$ and a unit vector $w_x\in \h_Y$ such that
$$
|\langle U(\delta_x\otimes v_0),\delta_{f(x)}\otimes w_x\rangle|\geq c;
$$
in particular, this defines a function $f:X\to Y$.  Similarly, for any $y\in Y$, there exists $g(y)\in X$ and a unit vector $v_y\in \h_X$ such that
$$
|\langle U^*(\delta_y\otimes w_0),\delta_{g(y)}\otimes v_y\rangle|\geq c,
$$
defining a function $g:Y\to X$.  Note that part 2 of Lemma \ref{cloudbound1} (with $E_x$ taken to be the span of $\{v_0\}$) implies that both $f:X\to Y$ and $g:Y\to X$ are uniformly expansive.  It remains to show that the compositions $f\circ g$ and $g\circ f$ are close to the identity.  

Fix for the moment $x\in X$ and assume for contradiction that the subset $g^{-1}(x)$ of $Y$ is infinite.  Define
$$
P=\sum_{y\in g^{-1}(x)}e_{(y,w_0),(y,w_0)},
$$  
which is an infinite rank projection in $A^*(Y)$.  Note that for any $y\in g^{-1}(x)$, we have that
\begin{equation}\label{lowbound}
|\langle U^*(\delta_y\otimes w_0),\delta_{x}\otimes v_y\rangle|\geq c.
\end{equation}
Let $P_x\in \B(l^2(X)\otimes \h_X)$ be the orthogonal projection onto $\text{span}\{\delta_x\}\otimes \h_X$.  Then the operator $P_xT$ is compact for any $T\in A^*(X)$ (as $T$ is a limit of locally compact finite propagation operators), whence $P_x(U^*PU)$ is compact.  On the other hand, for any $y\in g^{-1}(x)$, 
\begin{align*}
\|P_x(U^*PU)(U^*e_{(y,w_0),(y,w_0)}U)\| & =\|P_xU^*e_{(y,w_0),(y,w_0)}\| \\ & \geq |\langle U^*(\delta_y\otimes w_0),\delta_{x}\otimes v_y\rangle|\\ & \geq c
\end{align*}
using line \eqref{lowbound} above.  As the operators $U^*e_{(y,w_0),(y,w_0)}U$ tend $*$-strongly to zero as $y$ tends to infinity (in $g^{-1}(x)$), this contradicts compactness of $P_xU^*PU$, and so $g^{-1}(x)$ is finite. 

For each $x\in X$, let then 
$$
E_x=\text{span} \big(\{v_0\}\cup\{v_y~|~y\in g^{-1}(x)\}\big),
$$
which is a finite dimensional subspace of $\h_X$.  Note that for any $y\in Y$, the choice of $f$, $g$, $v_y$, and $w_{f(g(y))}$ implies that
$$
|\langle U(\delta_{g(y)}\otimes v_0), \delta_{f(g(y))}\otimes w_{g(y)}\rangle|\geq c ~~\text{ and }~~|\langle U^*(\delta_y\otimes w_0),\delta_{g(y)}\otimes v_y\rangle|\geq c,
$$
or in other words that 
$$
|\langle U(\delta_{g(y)}\otimes v_0), \delta_{f(g(y))}\otimes w_{g(y)}\rangle|\geq c ~~\text{ and }~~|\langle U(\delta_{g(y)}\otimes v_y),\delta_y\otimes w_0\rangle|\geq c.
$$
Hence Lemma \ref{cloudbound1}, part 1, implies the existence of some $S$ (independently of $y$) such that 
$$
d(y,f(g(y)))\leq S.
$$
This says that $f\circ g$ is close to the identity.  Similarly, $g\circ f$ is close to the identity, and we are done.
\end{proof}

\section{Algebraic isomorphisms and coarse equivalences}\label{asec}

In this section, we will prove the following theorem.

\begin{athe}\label{athe}
Say $X$ and $Y$ are spaces and $A[X]$ and $A[Y]$ are algebraic Roe-type algebras associated to $X$ and $Y$ respectively.

If there exists a $*$-isomorphism $\phi:A[X]\to A[Y]$, then $X$ and $Y$ are coarsely equivalent.
\end{athe}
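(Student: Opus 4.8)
The plan is to reproduce the architecture of the proof of Theorem \ref{fadthe}, but to replace its single use of property A---namely Lemma \ref{fadlem2}---by an argument exploiting the genuine (rather than approximate) finite propagation available in the algebraic setting. First I would apply Lemma \ref{spatiallem} to $\phi\colon A[X]\to A[Y]$: since $A[Y]$ is a dense $*$-subalgebra of its norm closure $A^*(Y)$, the lemma produces a unitary $U\colon l^2(X,\h_X)\to l^2(Y,\h_Y)$ with $\phi(T)=UTU^*$ for all $T\in A[X]$, and shows moreover that $\phi$ extends to a strongly continuous $*$-isomorphism $A^*(X)\to A^*(Y)$ implemented by the same $U$. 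Having fixed $U$ and unit vectors $v_0\in\h_X$, $w_0\in\h_Y$, I would then define $f\colon X\to Y$ and $g\colon Y\to X$ exactly as in the proof of Theorem \ref{fadthe}; the only ingredient needing a genuinely new proof is the analogue of Lemma \ref{fadlem2}, asserting a uniform $c>0$ with $|\langle U(\delta_x\otimes v_0),\delta_{f(x)}\otimes w_x\rangle|\ge c$ for a suitable $f(x)\in Y$ and unit vector $w_x\in\h_Y$.

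The crux is this replacement for Lemma \ref{fadlem2}, and here the algebraic hypotheses pay off. Set $\xi_x=U(\delta_x\otimes v_0)$, so that $\phi(e_{(x,v_0),(x,v_0)})$ is the rank-one projection onto $\C\xi_x$. The family $\{e_{(x,v_0),(x,v_0)}\}_{x\in X}$ consists of mutually orthogonal projections in $A[X]$, all of whose subsums converge strongly to operators in $A[X]$ (they have propagation zero and uniformly bounded rank); applying $\phi$ and using strong continuity, the same holds for $\{\phi(e_{(x,v_0),(x,v_0)})\}_{x\in X}$ inside $A[Y]$. Lemma \ref{jlem2} then furnishes a single $S\ge 0$, \emph{independent of $x$}, such that $\chi_A\phi(e_{(x,v_0),(x,v_0)})\chi_B=0$ whenever $d(A,B)>S$. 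Writing this rank-one operator out as $\eta\mapsto\langle\xi_x|_B,\eta\rangle\,\xi_x|_A$ shows that $d(A,B)>S$ forces $\xi_x|_A=0$ or $\xi_x|_B=0$; taking $A=\{y\}$ and $B=\{y'\}$ for $y,y'\in\su(\xi_x)$ then gives $d(y,y')\le S$, so $\di(\su(\xi_x))\le S$ for every $x$. Bounded geometry (Definition \ref{udbg}) now bounds $|\su(\xi_x)|$ by $N_S$, and since $\sum_y\|\xi_x(y)\|^2=1$ there is a point $f(x)$ with $\|\xi_x(f(x))\|^2\ge 1/N_S$; setting $w_x=\xi_x(f(x))/\|\xi_x(f(x))\|$ and $c=1/\sqrt{N_S}$ yields the desired lower bound, with no appeal whatsoever to property A.

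With $f$ in hand, and a symmetric construction of $g\colon Y\to X$ from $U^*$, $w_0$, the remainder is identical to the proof of Theorem \ref{fadthe}. Applying Lemma \ref{cloudbound1} (part 2, to the extension of $\phi$ to $A^*(X)\to A^*(Y)$, with $E_x=\C v_0$) shows that $f$ and $g$ are uniformly expansive; the compactness argument of that proof, together with part 1 of Lemma \ref{cloudbound1}, shows that $f\circ g$ and $g\circ f$ are close to the respective identity maps. By Definition \ref{cedef} this exhibits $X$ and $Y$ as coarsely equivalent.

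The step I expect to be the main obstacle is the \emph{uniformity} of $S$ over $x\in X$. Each individual operator $\phi(e_{(x,v_0),(x,v_0)})$ lies in $A[Y]$ and so has \emph{some} finite propagation $S_x$, which already localizes $\su(\xi_x)$; but a priori $S_x$ could grow without bound as $x$ varies, in which case $f$ would fail to be coarsely well-defined and the construction would collapse. Lemma \ref{jlem2} is precisely what excludes this, converting the individual finite-propagation bounds into one uniform $S$. This is the point at which the ``no cancellation of arbitrarily large propagation'' phenomenon does the essential work, and it explains why the algebraic case can dispense with property A entirely: exact finite propagation substitutes for the metric sparsification input that was unavoidable in Theorem \ref{fadthe}.
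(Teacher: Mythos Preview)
Your argument is correct. It differs from the paper's proof only in packaging. Both routes rest on Lemma~\ref{jlem2}: the paper uses it to prove a direct analogue of Lemma~\ref{cloudbound1} (recorded as Lemma~\ref{cloudbound2}) in which the conditions ``$\geq\delta$'' are replaced by ``$\neq 0$'', and then simply defines $f(x)$ to be \emph{any} $y$ with $\langle U(\delta_x\otimes v_0),\delta_y\otimes w_x\rangle\neq 0$; no uniform lower bound $c$ is ever needed. You instead push the same Lemma~\ref{jlem2} input one step further---bounding $\di(\su\,\xi_x)$ uniformly and then pigeon-holing via bounded geometry---to manufacture a genuine constant $c$, which lets you invoke Lemma~\ref{cloudbound1} verbatim (its proof, as you implicitly rely on, makes no use of property~A). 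The paper's route is marginally cleaner in that it avoids the pigeon-hole step and the appeal to the $C^*$-completion; yours has the virtue of literally reusing the machinery of Section~\ref{fadsec} without restating a new lemma. Either way, your identification of Lemma~\ref{jlem2} as the device that upgrades pointwise finite propagation to a \emph{uniform} bound is exactly the heart of the matter.
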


It follows from Lemma \ref{spatiallem} that $\phi$ is spatially implemented by a unitary isomorphism $U:l^2(X,\h_X)\to l^2(Y,\h_Y)$; we fix $U$ for the rest of this section.  We also fix unit vectors $v_0\in\h_X$ and $w_0\in\h_Y$.

The following lemma is an analogue of Lemma \ref{cloudbound1}.  The proof is essentially the same (using Lemmas \ref{jlem2} and \ref{cloudlem}), so we omit it.

\begin{cloudbound2}\label{cloudbound2}
\begin{enumerate}
\item For each $x\in X$, fix a finite dimensional subspace $E_x$ of $\h_X$.  Then there exists $S\geq 0$ such that if $x\in X$,  unit vectors $v_1,v_2\in E_x$, $y_1,y_2\in Y$ and unit vectors $w_1,w_2\in \h_Y$ are such that
$$
|\langle U(\delta_x\otimes v_1),\delta_{y_1}\otimes w_1\rangle|\neq0~~\text{ and }~~|\langle U(\delta_x\otimes v_2),\delta_{y_2}\otimes w_2\rangle|\neq0,
$$
then $d(y_1,y_2)\leq S$.
\item For each $x\in X$, fix a finite dimensional subspace $E_x$ of $\h_X$.  Then for all $R\geq 0$ there exists $S\geq0$ such that if such that if $x_1,x_2\in X$,  unit vectors $v_1\in E_{x_1},v_2\in E_{x_2}$, $y_1,y_2\in Y$ and unit vectors $w_1,w_2\in \h_Y$ are such that
$$
|\langle U(\delta_{x_1}\otimes v_1),\delta_{y_1}\otimes w_1\rangle|\neq 0~~\text{ and }~~|\langle U(\delta_{x_2}\otimes v_2),\delta_{y_2}\otimes w_2\rangle|\neq 0,
$$
then $d(y_1,y_2)\leq S$.
\end{enumerate}
The same properties hold with the roles of $X$ and $Y$ reversed, and with $U$ replaced by $U^*$. \qed
\end{cloudbound2}

\begin{proof}[Proof of Theorem \ref{athe}]
Define a function $f:X\to Y$ by choosing $f(x)$ to be any element of $Y$ such that there exists a unit vector $w_x\in \h_X$ such that
$$
|\langle U(\delta_x\otimes v_0),\delta_{f(x)}\otimes w_x\rangle|\neq 0.
$$
Similarly, define a function $g:Y\to X$ by choosing $g(y)$ to be any element of $X$ such that there exists a unit vector $v_y\in \h_Y$ such that
$$
|\langle U^*(\delta_y\otimes w_0),\delta_{g(y)}\otimes v_y\rangle|\neq 0.
$$
The proof concludes exactly as that of Theorem \ref{fadthe} above, using Lemma \ref{cloudbound2} in place of Lemma \ref{cloudbound1}.
\end{proof}

\section{The stable uniform case and crossed products}\label{cpsec}

In this section we prove our results in the stable uniform case.  This is slightly more subtle than the other cases we consider, but essentially the same argument works with a few minor changes to given analogues of Theorems \ref{fadthe} and \ref{athe} as below. 

\begin{suthe}\label{suthe}
\begin{enumerate}
\item Let $X$ and $Y$ be spaces with property A.  If there is a $*$-isomorphism
$\phi:C^*_s(X)\to C^*_s(Y)$,
then $X$ and $Y$ are coarsely equivalent.
\item Let $X$ and $Y$ be spaces.  If there is a $*$-isomorphism $\phi:\C_s[X]\to \C_s[Y]$, 
then $X$ and $Y$ are coarsely equivalent.
\end{enumerate}
\end{suthe}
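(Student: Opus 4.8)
The plan is to run the strategy of Theorems \ref{fadthe} and \ref{athe}, exploiting that Lemmas \ref{spatiallem}, \ref{jlem} and \ref{jlem2} are all phrased so as to cover $C^*_s$ and $\C_s$ directly. Thus $\phi$ is spatially implemented by a unitary $U\colon l^2(X,\h_X)\to l^2(Y,\h_Y)$, and, fixing unit vectors $v_0\in\h_X$ and $w_0\in\h_Y$, all the rank-one operators $e_{(x,v),(z,v')}$ lie in $\C_s[X]$, as does $1\otimes p$ for any finite-rank projection $p$ on $\h_X$. Since $\sum_x e_{(x,v_0),(x,v_0)}=1\otimes(v_0\otimes\overline{v_0})\in\C_s[X]$, the metric sparsification argument of Lemma \ref{fadlem2} applies verbatim and yields maps $f\colon X\to Y$, $g\colon Y\to X$, a constant $c>0$, and unit vectors $w_x\in\h_Y$, $v_y\in\h_X$ with $|\langle U(\delta_x\otimes v_0),\delta_{f(x)}\otimes w_x\rangle|\geq c$ and $|\langle U^*(\delta_y\otimes w_0),\delta_{g(y)}\otimes v_y\rangle|\geq c$. (In the algebraic case one instead invokes Lemma \ref{jlem2} as in Theorem \ref{athe}.) First I would observe that uniform expansiveness of $f$ and $g$ transfers with no change: the operators summed there are $\sum_n e_{(x_1^n,v_0),(x_2^n,v_0)}$, which have bounded propagation and every matrix entry equal to $v_0\otimes\overline{v_0}$, hence lie in $\C_s[X]$, so Lemma \ref{jlem} (resp.\ \ref{jlem2}) gives part 2 of Lemma \ref{cloudbound1} (resp.\ \ref{cloudbound2}). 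The finiteness of $g^{-1}(x)$ is likewise unaffected, as that argument uses only that $e_{(y,w_0),(y,w_0)}\in C^*_s(Y)$ and that $P_xT$ is compact for $T\in C^*_s(X)$, both of which hold since $C^*_s(X)\cong C^*_u(X)\otimes\K(\h_X)$ (Remark \ref{roerem}).

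The genuine difference, and the main obstacle, is in proving $f\circ g$ and $g\circ f$ close to the identity. Following Theorem \ref{fadthe}, at $x=g(y)$ one compares the image of the fixed direction $v_0$ (concentrating near $f(g(y))$) with that of the point-dependent direction $v_y$ (concentrating near $y$), and wants an analogue of part 1 of Lemma \ref{cloudbound1} to force these targets within a bounded distance. That proof forms the operators $\sum_n e_{(x^n,v_0),(x^n,v_{y^n})}$, whose single nonzero matrix entry at $(x^n,x^n)$ is the rank-one operator $u\mapsto\langle v_{y^n},u\rangle v_0$ on $\h_X$. This is exactly the point flagged in the footnote to Lemma \ref{cloudbound1}: such a sum lies in $C^*_s(X)=C^*_u(X)\otimes\K(\h_X)$ only when the blocks are uniformly approximable by a single finite-rank compression, that is, only when $\{v_{y^n}\}$ is relatively compact in $\h_X$. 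There is no reason for the $v_y$ to lie in a common finite-dimensional subspace, so Lemma \ref{jlem} cannot be applied as it stands.

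To handle this I would take a contradiction sequence $x^n=g(y^n)$ (distinct and tending to infinity, using finiteness of $g^{-1}(x)$) with $d(f(x^n),y^n)\to\infty$, and split into two cases by the behaviour of the unit vectors $v_{y^n}\in\h_X$: after passing to a subsequence, either (i) $v_{y^n}$ converges in norm, or (ii) $v_{y^n}$ is asymptotically orthonormal. In case (i) the set $\{v_0\}\cup\{v_{y^n}\}$ is relatively compact, so for every $\mathcal N\subseteq\N$ the block-diagonal sum $\sum_{n\in\mathcal N}e_{(x^n,v_0),(x^n,v_{y^n})}$ does lie in $C^*_s(X)$ (its compressions $(1\otimes p)(\cdot)(1\otimes p)$ converge to it in norm as $p\uparrow 1$); Lemma \ref{jlem} then applies, and the matrix-coefficient identity of Lemma \ref{cloudlem} produces a block of norm $\geq c^2$ at $(f(x^n),y^n)$ with $d(f(x^n),y^n)\to\infty$, the desired contradiction. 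This case is therefore clean and essentially identical to Theorem \ref{fadthe}.

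The substantial difficulty I expect is case (ii), where the sum genuinely leaves $C^*_s(X)$. Here I would abandon the rank-one operators and use the tensor structure directly: for $k\in\K(\h_X)$ the operator $1\otimes k$ lies in $C^*_s(X)$, so $U(1\otimes k)U^*$ is forced to lie in $C^*_s(Y)$, hence is a norm-limit of finite-propagation operators. Choosing $k$ to have the (asymptotically orthonormal) vectors $v_{y^n}$ as scaled eigenvectors converts the escaping directions into eigenvector data for a single operator in $C^*_s(Y)$, and I would combine this with local compactness (the mechanism used to prove $g^{-1}(x)$ finite) to rule out the escape and recover $d(f(g(y)),y)\leq S$; the map $g\circ f$ is symmetric. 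I regard this escaping case as the real content of the ``minor changes'' needed, the rest of the argument being a direct transcription of Theorems \ref{fadthe} and \ref{athe}. The algebraic statement (2) follows by the same dichotomy with Lemma \ref{jlem2} in place of Lemma \ref{jlem}, so that the estimates ``$<\epsilon$'' become exact vanishing ``$=0$'', which in fact simplifies the treatment of case (ii) since one may there work with finite-rank $k$ and exact support conditions.
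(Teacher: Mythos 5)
Your reduction of the problem is accurate: the routine parts (spatial implementation, Lemma \ref{fadlem2}, uniform expansiveness via sums whose entries all lie in $\mathcal{B}(\C v_0)$, finiteness of $g^{-1}(x)$) do transfer, and the genuine obstacle is exactly where you locate it --- the point-dependent vectors $v_y$ need not lie in a common finite-dimensional subspace, so the sums $\sum_n e_{(x^n,v_0),(x^n,v_{y^n})}$ need not lie in $C^*_s(X)$ and Lemma \ref{jlem} cannot be invoked. However, your proposed resolution has a genuine gap. First, the dichotomy ``(i) $v_{y^n}$ norm-convergent / (ii) $v_{y^n}$ asymptotically orthonormal'' is not exhaustive: a sequence of unit vectors may, after any passage to subsequences, have the form $v+u_n$ with $0<\|v\|<1$ fixed and $(u_n)$ almost orthogonal, which falls in neither case. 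Second, and more seriously, case (ii) is only a plan, not an argument: for $k\in\K(\h_X)$ with the $v_{y^n}$ as eigenvectors the eigenvalues must tend to $0$, which destroys the uniform lower bound $c$ you are trying to exploit, and you do not explain how membership of $U(1\otimes k)U^*$ in $C^*_s(Y)$ then yields a contradiction. Since you yourself identify this case as ``the real content,'' the proof is not complete.

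The paper avoids the case analysis entirely by a compression trick (Lemma \ref{sulem}): using the defining property of $C^*_s(Y)$ --- that its elements $T$ satisfy $\|T(1-1\otimes p_n)\|\to 0$ along any increasing sequence of finite-rank projections $p_n\uparrow 1$ --- together with a Riemann-rearrangement argument on sums $\sum_{m\in\mathcal{M}}e_{(x_m,v_0),(x_m,v_0)}$, one finds a \emph{single} finite-rank projection $p$ with $\|Ue_{(x,v_0),(x,v_0)}U^*(1-1\otimes p)\|\leq c/2$ uniformly in $x$. Hence every $v_y$ (and $w_x$) may be replaced by its compression into the fixed finite-dimensional subspace $p\h$ while retaining the lower bound $c/2$; after that, the fixed-subspace variant of Lemma \ref{cloudbound1} (Lemma \ref{sucloud}) applies verbatim and the closeness argument goes through for \emph{all} $y$ simultaneously, with no hypothesis on the asymptotic behaviour of the $v_y$. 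You may wish to note that once this compression is available, the finiteness of $g^{-1}(x)$ is no longer needed to produce the finite-dimensional subspaces.
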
 

The following two corollaries are our main motivation for putting in the extra effort needed to prove this theorem; the first is part of Theorem \ref{main1} and the second is Corollary \ref{groupc} from the introduction.

\begin{morcor}\label{morcor}
Let $X$ and $Y$ be spaces with property A.  Then $C^*_u(X)$ is Morita equivalent to $C^*_u(Y)$ if and only if $X$ is coarsely equivalent to $Y$.
\end{morcor}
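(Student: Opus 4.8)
The plan is to reduce the corollary to the stable uniform case already treated in Theorem \ref{suthe}, via the standard identification of Morita equivalence with stable isomorphism. The whole statement should fall out by combining Theorem \ref{suthe}(1) with two routine $C^*$-algebraic facts, so the bulk of the work is bookkeeping rather than new analysis.

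First I would record the two ingredients. The identity operator on $l^2(X)$ has propagation zero and uniformly bounded entries, so it lies in $\C_u[X]$; hence $C^*_u(X)$ is unital, and in particular $\sigma$-unital (the same holds for $Y$). Next, by Remark \ref{roerem}(1) there is a canonical isomorphism $C^*_s(X)\cong C^*_u(X)\otimes\K(\h)$, where $\h$ is a separable infinite dimensional Hilbert space, and likewise $C^*_s(Y)\cong C^*_u(Y)\otimes\K(\h)$.

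For the forward implication, suppose $C^*_u(X)$ and $C^*_u(Y)$ are Morita equivalent. Since both algebras are $\sigma$-unital, the Brown--Green--Rieffel stabilization theorem provides a $*$-isomorphism $C^*_u(X)\otimes\K(\h)\cong C^*_u(Y)\otimes\K(\h)$; under the identification from Remark \ref{roerem}(1), this is precisely a $*$-isomorphism $C^*_s(X)\cong C^*_s(Y)$. As $X$ and $Y$ have property A by hypothesis, Theorem \ref{suthe}(1) then yields that $X$ and $Y$ are coarsely equivalent. For the converse, if $X$ and $Y$ are coarsely equivalent, then Remark \ref{roerem}(4) gives directly that $C^*_u(X)$ and $C^*_u(Y)$ are Morita equivalent (this is the cited theorem of Brodzki--Niblo--Wright), which completes the argument.

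I do not expect a serious obstacle: the corollary is essentially a packaging of Theorem \ref{suthe}(1) with standard facts. The only points requiring care are verifying that $C^*_u(X)$ is $\sigma$-unital so that Brown--Green--Rieffel applies (handled by the unitality observation above) and keeping the stabilization identification $C^*_s\cong C^*_u\otimes\K(\h)$ consistent on both sides. All the genuine mathematical content sits inside the already-established stable uniform Theorem \ref{suthe}(1), where property A and the metric sparsification machinery do the real work.
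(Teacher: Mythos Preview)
Your proposal is correct and follows essentially the same route as the paper: reduce Morita equivalence to stable isomorphism via Brown--Green--Rieffel, identify the stabilization with $C^*_s$ using Remark~\ref{roerem}(1), and invoke Theorem~\ref{suthe}(1), with the converse handled by the Brodzki--Niblo--Wright result. Your additional remark that $C^*_u(X)$ is unital (hence $\sigma$-unital) makes explicit the hypothesis needed for Brown--Green--Rieffel, which the paper leaves implicit.
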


\begin{proof}
If $X$ is coarsely equivalent to $Y$, then $C^*_u(X)$ is Morita equivalent to $C^*_u(Y)$ by \cite[Theorem 4]{Brodzki:2007mi}.  On the other hand, if $C^*_u(X)$ and $C^*_u(Y)$ are Morita equivalent, then $C_s^*(X)\cong C^*_s(Y)$ by \cite[Theorem 1.2]{Lawrence-G.-Brown:1977co}, so the result follows from part 1 of Theorem \ref{suthe} above.
\end{proof}

\begin{groupc2}
Say $\Gamma$ and $\Lambda$ are finitely generated $C^*$-exact groups. Then the following are equivalent:
\begin{itemize}
\item $\Gamma$ and $\Lambda$ are quasi-isometric;
\item $l^\infty(\Gamma)\rtimes_r\Gamma$ and $l^\infty(\Lambda)\rtimes_r\Lambda$ are Morita equivalent.
\end{itemize}
If moreover $\Gamma$ and $\Lambda$ are non-amenable, then these are also equivalent to:
\begin{itemize}
\item $l^\infty(\Gamma)\rtimes_r\Gamma$ and $l^\infty(\Lambda)\rtimes_r\Lambda$ are $*$-isomorphic.
\end{itemize}
\end{groupc2}

\begin{proof}
For $\Gamma$ a (finitely generated) discrete group, the crossed product $l^\infty(\Gamma)\rtimes_r\Gamma$ is isomorphic to $C^*_u(X)$, where $X$ is the space defined by fixing any word metric on $\Gamma$ - this observation is due to Higson and Yu, see for example \cite[Proposition 5.1.3]{Brown:2008qy}.  Moreover, $C^*$-exactness is equivalent to property A by a result of Guentner--Kaminker \cite{Guentner:2022hc} and Ozawa \cite{Ozawa:2000th}.   

Corollary \ref{morcor} then implies that $\Gamma$ and $\Lambda$ are coarsely equivalent if and only if $l^\infty(\Gamma)\rtimes_r\Gamma$ and $l^\infty(\Lambda)\rtimes_r\Lambda$ are Morita equivalent.  Moreover, for finitely generated groups, coarse equivalence is well-know (and easily seen) to be the same as quasi-isometry.  

Finally, in the non-amenable case it follows from a result of Whyte \cite[Theorem 1.1]{Whyte:1999uq} that if $\Gamma$ and $\Lambda$ are quasi-isometric, then they are bi-Lipschitz equivalent.  This is easily seen to imply that $l^\infty(\Gamma)\rtimes_r\Gamma$ and $l^\infty(\Lambda)\rtimes_r\Lambda$ are $*$-isomorphic.
\end{proof}

We will concentrate on part 1 of Theorem \ref{suthe}: part 2 follows from the same ingredients, having made essentially the same changes we made when passing from Section \ref{fadsec} to Section \ref{asec} above.  As in Section \ref{fadsec}, fix $X$ and $Y$ with property A and a $*$-isomorphism $\phi$ as in the statement of Theorem \ref{suthe}, part 1, and let $U:l^2(X,\h_X)\to l^2(Y,\h_Y)$ be a unitary isomorphism spatially implementing $\phi$ as in Lemma \ref{spatiallem}.  Finally, fix unit vectors $v_0\in \h_X$ and $w_0\in\h_Y$.

The only really new ingredient we need in the proof of Theorem \ref{suthe} is the following lemma.

\begin{sulem}\label{sulem}
Say $c>0$ is such that for each $x\in X$ there exists $f(x)\in Y$ and a unit vector $w_x\in\h_Y$ such that
$$
|\langle U(\delta_x\otimes v_0),\delta_{f(x)}\otimes w_x\rangle|\geq c.
$$
Then there exists a finite rank projection $p\in\mathcal{B}(\h_X)$ such that
$$
|\langle U(\delta_x\otimes v_0),\delta_{f(x)}\otimes pw_x\rangle|\geq c/2
$$
for all $x\in X$.  

The same statement holds with the roles of $X$ and $Y$ reversed, $U$ replaced by $U^*$ and $v_0$ replaced by $w_0$.
\end{sulem}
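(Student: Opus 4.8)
The plan is to produce the required finite-rank projection on $\h_Y$ by approximating a single well-chosen projection coming from $C^*_s(X)$ inside $C^*_s(Y)$, and then reading off $p$ from the finite-dimensional subspace attached to a nearby element of the dense subalgebra $\C_s[Y]$. Concretely, set $\xi_x=U(\delta_x\otimes v_0)$ and write $\eta_x=\xi_x(f(x))\in\h_Y$ for the value of the function $\xi_x$ at $f(x)$, so that $\langle U(\delta_x\otimes v_0),\delta_{f(x)}\otimes w\rangle=\langle \eta_x,w\rangle$ for every $w\in\h_Y$; the hypothesis then reads $|\langle \eta_x,w_x\rangle|\geq c$ for each $x$. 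Since $p^*=p$, the conclusion amounts to $|\langle p\eta_x,w_x\rangle|\geq c/2$, and from
$$
|\langle p\eta_x,w_x\rangle|\geq |\langle\eta_x,w_x\rangle|-\|(1-p)\eta_x\|
$$
we see it suffices to find a \emph{single} finite-rank projection $p$ with $\|(1-p)\eta_x\|<c/2$ for all $x\in X$ simultaneously.

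First I would introduce the operator
$$
P=\sum_{x\in X}e_{(x,v_0),(x,v_0)},
$$
which has propagation zero, is locally compact, and has all matrix entries lying in $\mathcal{B}(\mathrm{span}\{v_0\})$; hence $P\in\C_s[X]\subseteq C^*_s(X)$, and it is the orthogonal projection onto $\overline{\mathrm{span}}\{\delta_x\otimes v_0\}$. Since $\phi$ is spatially implemented by $U$ (Lemma \ref{spatiallem}), $\phi(P)=UPU^*$ is the orthogonal projection onto $\overline{\mathrm{span}}\{\xi_x\}$; in particular $\phi(P)\xi_x=\xi_x$ for every $x$, and $\phi(P)\in C^*_s(Y)$.

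The key step is to exploit the defining feature of $C^*_s(Y)$, namely that it is the closure of $\C_s[Y]$, each of whose elements has all matrix entries confined to a single finite-dimensional subspace of $\h_Y$. So I would choose $T\in\C_s[Y]$ with $\|\phi(P)-T\|<c/2$, let $\h_T\subseteq\h_Y$ be the associated finite-dimensional subspace, and let $p$ be the orthogonal projection onto $\h_T$. Then $T_{yy'}=pT_{yy'}p$ for all $y,y'$ gives $(1\otimes(1-p))T=0$, whence
$$
\|(1\otimes(1-p))\phi(P)\|=\|(1\otimes(1-p))(\phi(P)-T)\|<c/2.
$$
Using $\phi(P)\xi_x=\xi_x$ this yields $\|(1\otimes(1-p))\xi_x\|<c/2$, and restricting the function $(1\otimes(1-p))\xi_x$ to the single point $f(x)$ gives $\|(1-p)\eta_x\|\leq\|(1\otimes(1-p))\xi_x\|<c/2$ uniformly in $x$. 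Combined with the reduction in the first paragraph this proves the estimate, and the statement for $X,Y$ reversed follows by symmetry.

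The main obstacle, and the whole reason this lemma is needed in the stable uniform case but not in the others, is precisely the uniformity in $x$: each individual entry $\phi(P)_{f(x)f(x)}$ is compact, but without further input there is no reason a single finite-rank $p$ should approximate all the $\eta_x$ at once (indeed this would fail in $C^*(Y)$). The point to get right is that the \emph{single} finite-dimensional subspace $\h_T$ furnished by membership in $\C_s[Y]$ does the job for every $x$, and that a norm-approximation of $\phi(P)$ transfers to a uniform bound on $\|(1-p)\eta_x\|$ through the identity $\phi(P)\xi_x=\xi_x$.
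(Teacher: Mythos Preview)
Your argument is correct. The reduction to finding a single finite-rank $p$ with $\|(1-p)\eta_x\|<c/2$ for all $x$ is clean, and the key observation---that a norm-close approximant $T\in\C_s[Y]$ to $\phi(P)$ comes with a single finite-dimensional subspace $\h_T$ on which all its matrix entries live, so that $(1\otimes(1-p))T=0$---is exactly what is needed. The identity $\phi(P)\xi_x=\xi_x$ then transfers the operator-norm bound $\|(1\otimes(1-p))\phi(P)\|<c/2$ to the uniform vector bound. (Note, incidentally, that the projection $p$ naturally lives in $\mathcal{B}(\h_Y)$, as you have it; the $\h_X$ in the statement is a typo.)

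The paper takes a different and more laborious route: it argues by contradiction, assuming that for every finite-rank projection $p_n$ in an increasing sequence converging strongly to the identity there is some $x_n$ with $\|Ue_{(x_n,v_0),(x_n,v_0)}U^*(1-1\otimes p_n)\|\geq c/2$, and then runs a Riemann-rearrangement argument in the style of Lemma~\ref{jlem} to produce subsets $\mathcal{M},\mathcal{N}\subseteq\N$ for which $\phi\big(\sum_{m\in\mathcal{M}}e_{(x_m,v_0),(x_m,v_0)}\big)$ fails to lie in $C^*_s(Y)$. Your approach bypasses that machinery entirely by going straight to a single dense-subalgebra approximant of $\phi(P)$; this is shorter and more transparent, and it isolates precisely the feature of $\C_s[Y]$ (a \emph{single} finite-dimensional $\h_T$ works for all matrix entries) that makes the stable uniform case go through. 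The paper's argument, on the other hand, stays closer to the template of Lemma~\ref{jlem} used elsewhere, which may explain the choice even though it is less direct here.
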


\begin{proof}
We claim first that there exists a finite rank projection $p\in \mathcal{B}(\h_X)$ such that 
\begin{equation}\label{uniinq}
\|Ue_{(x,v_0),(x,v_0)}U^*(1-1\otimes p)\|\leq c/2
\end{equation}
for all $x\in X$.  Indeed, say otherwise, in which case there exists an increasing sequence of finite rank projections $(p_n)$ in $\mathcal{B}(\h_X)$ that converge strongly to the identity, and a sequence $(x_n)$ of elements of $X$ (which we may assume distinct) such that 
$$
\|Ue_{(x_n,v_0),(x_n,v_0)}U^*(1-1\otimes p_n)\|\geq c/2
$$  
for all $n$.  Moreover, there then exist finite rank projections $q_n\in\mathcal{B}(l^2(X,\h_X))$ such that
$$
\|q_nUe_{(x_n,v_0),(x_n,v_0)}U^*(1-1\otimes p_n)q_n\|\geq c/4
$$
for all $n$.  Using finite dimensionality of the projections $q_n$, an argument based on the Riemann rearrangement theorem as in the proof of Lemma \ref{jlem} shows that there exist subsets $\mathcal{M},\mathcal{N}\subseteq\N$ such that $\mathcal{N}$ is infinite, and such that
\begin{align*}
 \Big\|q_nU\Big(\sum_{m\in\mathcal{M}}e_{(x_m,v_0),(x_m,v_0)}\Big)U^*(1-1\otimes p_n)q_n\Big\|  \geq c/16
\end{align*}
for all $n\in\mathcal{N}$.  This, however, implies that
$$
\Big\|U\Big(\sum_{m\in\mathcal{M}} e_{(x_m,v_0),(x_m,v_0)}\Big)U^*(1-1\otimes p_n)\Big\| \geq c/16
$$
for all $n$, which contradicts that
$$
U\Big(\sum_{m\in\mathcal{M}}e_{(x_m,v_0),(x_m,v_0)}\Big)U^*=\phi\Big(\sum_{m\in\mathcal{M}}e_{(x_m,v_0),(x_m,v_0)}\Big)
$$
is an element of $C_s^*(Y)$.  This establishes the claim in line \eqref{uniinq}.

Note then that for any $x\in X$,
\begin{align*}
|\langle U(\delta_x\otimes v_0) & ,\delta_{f(x)}\otimes pw_x\rangle| =\|Ue_{(x,v_0),(x,v_0)}U^*(\delta_{f(x)}\otimes pw_x)\| \\
& \geq \|Ue_{(x,v_0),(x,v_0)}U^*(\delta_{f(x)}\otimes w_x)\|-\|Ue_{(x,v_0),(x,v_0)}U^*(1-1\otimes p)\| \\
& \geq c-c/2=c/2.
\end{align*}
The last statement is clear from symmetry.
\end{proof}

The next lemma is a close analogue of Lemma \ref{cloudbound1}: the only change is that we fix a finite dimensional subspace $E_X$ of $\h_X$ rather than allowing a family of finite dimensional subspaces $\{E_x\}_{x\in X}$.  Having made this change, the proof of Lemma \ref{cloudbound1} goes through essentially verbatim.

\begin{sucloud}\label{sucloud}
Fix a finite dimensional subspace $E_X$ of $\h_X$.
\begin{enumerate}
\item Let $\delta>0$.  Then there exists $S\geq 0$ such that if $x\in X$,  unit vectors $v_1,v_2\in E_X$, $y_1,y_2\in Y$ and unit vectors $w_1,w_2\in \h_Y$ are such that
$$
|\langle U(\delta_x\otimes v_1),\delta_{y_1}\otimes w_1\rangle|\geq \delta~~\text{ and }~~|\langle U(\delta_x\otimes v_2),\delta_{y_2}\otimes w_2\rangle|\geq \delta,
$$
then $d(y_1,y_2)\leq S$.
\item Let $\delta>0$.  Then for all $R\geq 0$ there exists $S\geq0$ such that if such that if $x_1,x_2\in X$,  unit vectors $v_1,v_2\in E_X$, $y_1,y_2\in Y$ and unit vectors $w_1,w_2\in \h_Y$ are such that
$$
|\langle U(\delta_{x_1}\otimes v_1),\delta_{y_1}\otimes w_1\rangle|\geq \delta~~\text{ and }~~|\langle U(\delta_{x_2}\otimes v_2),\delta_{y_2}\otimes w_2\rangle|\geq \delta,
$$
then $d(y_1,y_2)\leq S$.
\end{enumerate}
The same properties hold with the roles of $X$ and $Y$ reversed, and with $U$ replaced by $U^*$. \qed
\end{sucloud}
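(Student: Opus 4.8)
The plan is to run the proof of Lemma \ref{cloudbound1} essentially verbatim, replacing the family $\{E_x\}_{x\in X}$ by the single fixed subspace $E_X$ throughout, and to verify the one place where the original argument genuinely used membership in a Roe-type algebra $A^*(X)$ rather than in $C^*_s(X)$. As there, I would first record that the statement with the roles of $X$ and $Y$ interchanged follows by symmetry, and that part 1 is a special case of part 2 (take $x_1=x_2$), so it suffices to prove part 2 as stated.

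First I would argue by contradiction. Supposing part 2 fails for some $\delta>0$ and $R\geq0$, I would extract sequences $(x_1^n),(x_2^n)$ in $X$ with $d(x_1^n,x_2^n)\leq R$, unit vectors $v_1^n,v_2^n\in E_X$, points $y_1^n,y_2^n\in Y$ and unit vectors $w_1^n,w_2^n\in\h_Y$ realizing the two lower bounds of size $\delta$, but with $d(y_1^n,y_2^n)\to\infty$. Exactly as in Lemma \ref{cloudbound1}, at least one sequence $(y_i^n)$ has a subsequence escaping to infinity in $Y$; taking this to be $(y_1^n)$, the unit vectors $\delta_{y_1^n}\otimes w_1^n$ tend weakly to zero, so the lower bound forces $\delta_{x_1^n}\otimes v_1^n$ to leave every norm-compact subset of $l^2(X)\otimes\h_X$. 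Since now all the $v_1^n$ lie in the \emph{single} finite dimensional space $E_X$, this forces $(x_1^n)$ to escape to infinity in $X$; passing to a subsequence with $d(x_1^n,x_1^{n+1})>2R$ then makes the points $x_1^n,x_2^n$ pairwise distinct.

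The crucial step, and the only one where the $C^*_s$ setting differs substantively, is to verify that for every $\mathcal{N}\subseteq\N$ the sum $\sum_{n\in\mathcal{N}}e_{(x_1^n,v_1^n),(x_2^n,v_2^n)}$ converges strongly to an operator lying in $C^*_s(X)$. The footnote to Lemma \ref{cloudbound1} warns precisely that this membership can fail for $C^*_s(X)$ when the vectors range over varying subspaces $E_x$. With $E_X$ fixed, however, every summand has finite propagation at most $R$, and every matrix coefficient of every summand lies in $\mathcal{B}(E_X)$; hence the (bounded, by mutual orthogonality) strong limit is again a finite propagation operator all of whose matrix coefficients lie in $\mathcal{B}(E_X)$, so it lies in $\C_s[X]\subseteq C^*_s(X)$ with common subspace $\h_T=E_X$. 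This is exactly the phenomenon that motivates fixing $E_X$ in the statement.

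Granting this, the contradiction is reached as before: strong continuity of $\phi$ (from Lemma \ref{spatiallem}) transports these to strongly convergent sums $\sum_{n\in\mathcal{N}}\phi(e_{(x_1^n,v_1^n),(x_2^n,v_2^n)})$ in $C^*_s(Y)$, so Lemma \ref{jlem} (which is stated to cover $C^*_s(Y)$) yields $S>0$ with $\|\chi_A\phi(e_{(x_1^n,v_1^n),(x_2^n,v_2^n)})\chi_B\|<\delta^2$ whenever $d(A,B)>S$; taking $A=\{y_1^n\}$ and $B=\{y_2^n\}$ for $n$ large enough that $d(y_1^n,y_2^n)>S$, Lemma \ref{cloudlem} bounds this norm below by the product of the two inner products, which exceeds $\delta^2$ by hypothesis — a contradiction. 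I expect the only real obstacle to be the membership verification in the crucial step above, and fixing $E_X$ is precisely what resolves it; the remainder is a cosmetic relabelling of the proof of Lemma \ref{cloudbound1}.
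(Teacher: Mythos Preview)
Your proposal is correct and is exactly the approach the paper takes: the paper states that the proof of Lemma \ref{cloudbound1} goes through essentially verbatim once the family $\{E_x\}$ is replaced by a single fixed $E_X$, and gives no further argument. You have moreover identified precisely the one substantive point---that the strong limit $\sum_{n\in\mathcal{N}}e_{(x_1^n,v_1^n),(x_2^n,v_2^n)}$ lands in $\C_s[X]$ because all matrix entries lie in $\mathcal{B}(E_X)$---which is the content of the footnote in Lemma \ref{cloudbound1} and the reason for the modified hypothesis.
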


\begin{proof}[Proof of Theorem \ref{suthe}, part 1]

Just as in the proof of Theorem \ref{fadthe}, let $c>0$ have the property in Lemma \ref{fadlem2} (which still holds in this context, with the same proof) for both $X$ and $Y$.   Lemma \ref{fadlem2} implies that for each $x\in X$, there exists an element $f(x)$ of $Y$ and a unit vector $w_x\in \h_Y$ such that
$$
|\langle U(\delta_x\otimes v_0),\delta_{f(x)}\otimes w_x\rangle|\geq c;
$$
in particular, this defines a function $f:X\to Y$.  Similarly, for any $y\in Y$, there exists $g(y)\in X$ and a unit vector $v_y\in \h_X$ such that
$$
|\langle U^*(\delta_y\otimes w_0),\delta_{g(y)}\otimes v_y\rangle|\geq c,
$$
defining a function $g:Y\to X$.  Note that part 2 of Lemma \ref{sulem} (with $E_X$ taken to be the span of $\{v_0\}$) implies that $f:X\to Y$ and $g:Y\to X$ are uniformly expansive.  It remains to show that the compositions $f\circ g$ and $g\circ f$ are close to the identity.  

Using Lemma \ref{sulem} there exist finite rank projections $p_X\in \mathcal{B}(\h_X)$ and $p_Y\in \mathcal{B}(\h_Y)$ such that
$$
|\langle U(\delta_x\otimes v_0),\delta_{f(x)}\otimes p_Yw_x\rangle|\geq c/2
$$
for all $x\in X$ and  
$$
|\langle U^*(\delta_y\otimes w_0),\delta_{g(y)}\otimes p_X v_y\rangle|\geq c/2
$$
for all $y\in Y$.  Set $E_X=p_X\h_X$ and $E_Y=p_Y\h_Y$.  The proof can be completed just as in the case of Theorem \ref{fadthe}, using Lemma \ref{sucloud} in place of Lemma \ref{cloudbound1}.
\end{proof}

\appendix

\section{Categorical interpretation}\label{cat}

The aim of this appendix is to provide a slightly more conceptual framework for part 3 of Theorem \ref{main}.  For simplicity, we work throughout with the algebraic Roe algebras $\C[X]$, rather than one of the variants.  Throughout, we fix a separable, infinite dimensional Hilbert space $\h$, and assume that the algebraic Roe algebra of any space $X$ is  defined using $l^2(X,\h)$.

We show that coarse equivalences, and spatially implemented isomorphisms of algebraic Roe algebras are `essentially the same thing'.   More precisely, we define two categories of metric spaces: the morphisms in the first are given by closeness classes of coarse maps, and those in the second by `closeness' classes of spatially implemented isomorphisms of Roe algebras; we then show that these two categories are isomorphic.   

It is an unfortunate deficiency of our methods that all morphisms in our categories are isomorphisms: it is not currently clear to us if something more general is possible. Note also that the only novelty in this section is part 1 of Lemma \ref{covlem} (which follows immediately from our techniques in the rest of the piece); the remaining material comes from ideas used by Higson--Roe--Yu in their $K$-theoretic analyses of Roe algebras (see for example \cite[Section 4]{Higson:1993th}, which is probably the original reference along these lines.).

\begin{corcat}\label{corcat}
Define a category $\mathcal{C}_0$ by setting the objects to be uniformly discrete, bounded geometry metric spaces, and the morphisms to be coarse equivalences.  

Define a category $\mathcal{C}$ to be the quotient category of $\mathcal{C}_0$ under the equivalence relation on morphisms defined by closeness.
\end{corcat}

\begin{roecat}\label{roecat}
Let $X$ and $Y$ be spaces, and $U,V:l^2(X,\h)\to l^2(Y,\h)$ be unitary isomorphisms.  $U$ and $V$ are said to be \emph{close} if $U^*V$ is a finite propagation operator on $l^2(X,\h)$.

Define a category $\mathcal{R}_0$ by setting the objects to be uniformly discrete, bounded geometry metric spaces, and the morphisms to be unitary isomorphisms\footnote{Using lemma \ref{spatiallem}, it would be equivalent to define the morphisms to be $*$-algebra isomorphisms from $\C[X]$ to $\C[Y]$.} $U:l^2(X,\h)\to l^2(Y,\h)$ such that $T\mapsto UTU^*$ defines an isomorphism $\C[X]\to \C[Y]$.  

Define a category $\mathcal{R}$ to be the quotient category of $\mathcal{R}_0$ under the equivalence relation on morphisms defined by closeness. 
\end{roecat}

The following definition relates morphisms in $\mathcal{C}_0$ to morphisms in $\mathcal{R}_0$.

\begin{cover}\label{cover}
Let $f:X\to Y$ be a coarse equivalence, and $U:l^2(X,\h)\to l^2(Y,\h)$ a unitary isomorphism.  $U$ is said to \emph{cover} $f$ if there exists $C\geq 0$ such that for all $x\in X$ and $y\in Y$, $U_{yx}\neq 0$ implies that $d(f(x),y)\leq C$.
\end{cover}

\begin{covlem}\label{covlem}
\begin{enumerate}
\item For any morphism $U$ in $\mathcal{R}_0$, there exists a morphism $f$ in $\mathcal{C}_0$ covered by $U$. 
\item For any morphism $f$ in $\mathcal{C}_0$, there exists a morphism $U$ in $\mathcal{R}_0$ that covers $f$.  
\end{enumerate}
Moreover, any two morphisms in $\mathcal{R}_0$ covering the same morphism in $\mathcal{C}_0$ are close, and any two morphisms in $\mathcal{C}_0$ covered by a single morphism in $\mathcal{R}_0$ are close.
\end{covlem}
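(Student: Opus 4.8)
The plan is to handle the two existence claims (parts (1) and (2)) and the two ``moreover'' closeness claims separately, since the closeness claims are soft consequences of the covering relation whereas the existence claims carry the content, with part (1) being the genuinely new ingredient.

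For part (2), given a coarse equivalence $f:X\to Y$ I would build a covering unitary by the standard covering-isometry device. Fixing a coarse inverse $g:Y\to X$, the fibres $f^{-1}(y)$ have uniformly bounded cardinality (if $f(x)=f(x')$ then $d(x,x')\leq 2K$, where $K$ bounds $d(g\circ f,\mathrm{id})$, so bounded geometry caps $|f^{-1}(y)|$), and $f(X)$ is coarsely dense. Using that $\h$ is infinite dimensional, I would pick for each $x$ an isometry $\Phi_x:\h\to\h$ so that $\{\Phi_x(\h):x\in f^{-1}(y)\}$ are mutually orthogonal, set $U_0(\delta_x\otimes v)=\delta_{f(x)}\otimes\Phi_x(v)$, and then enlarge $U_0$ to a unitary $U$ by redistributing the leftover dimensions (and the fibres over $Y\setminus f(X)$) across bounded distances, which coarse density permits. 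By construction $U_{yx}\neq 0$ forces $y$ within a bounded distance of $f(x)$, so $U$ covers $f$; and if $T\in\C[X]$ has propagation $R$, then uniform expansiveness of $f$ bounds the propagation of $UTU^*$, while local compactness and the rank condition are preserved, so $U$ is a morphism in $\mathcal{R}_0$.

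For part (1), given $U$ in $\mathcal{R}_0$ I would define $f$ exactly as in the proof of Theorem \ref{athe}, taking $f(x)$ to be any $y$ with $\langle U(\delta_x\otimes v_0),\delta_{f(x)}\otimes w_x\rangle\neq 0$ for some unit $w_x$; that theorem already gives that $f$ is a coarse equivalence. The remaining, and only new, task is to show $U$ covers $f$, i.e.\ that $\{y:U_{yx}\neq 0\}$ lies in a fixed $C$-neighbourhood of $f(x)$ uniformly in $x$. Assuming not, there are $x_n,y_n$ with $U_{y_nx_n}\neq 0$ and $d(f(x_n),y_n)\to\infty$, and unit $v_n,w_n$ witnessing $U_{y_nx_n}\neq 0$. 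If $(x_n)$ has a subsequence tending to infinity, the existing technique applies verbatim: the operators $e_{(x_n,v_n),(x_n,v_0)}$ lie in distinct $X$-fibres, hence are mutually orthogonal with all subsums in $\C[X]$, so Lemma \ref{jlem2} yields a uniform $S$ with $\chi_{\{y_n\}}\,\phi(e_{(x_n,v_n),(x_n,v_0)})\,\chi_{\{f(x_n)\}}=0$ once $d(y_n,f(x_n))>S$, contradicting Lemma \ref{cloudlem}, which computes this compression as the nonzero product $\langle\delta_{y_n}\otimes w_n,U(\delta_{x_n}\otimes v_n)\rangle\langle U(\delta_{x_n}\otimes v_0),\delta_{f(x_n)}\otimes w_{x_n}\rangle$. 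The hard case is when $(x_n)$ is bounded, hence, by uniform discreteness, constant equal to some fixed $x$: now the $e_{(x,v_n),(x,v_0)}$ all sit in the single fibre over $x$ and are not orthogonal, so Lemma \ref{jlem2} is unavailable on the $X$-side. Here I would instead use that $U^*(\cdot)U$ also preserves the Roe algebra: the backward images $\zeta_n=U^*(\delta_{y_n}\otimes w_n)$ are orthonormal (the $y_n$ being distinct), so the projections $e_{(y_n,w_n),(y_n,w_n)}\in\C[Y]$ are mutually orthogonal with subsums in $\C[Y]$, and applying Lemma \ref{jlem2} in $X$ to their $U^*$-conjugates bounds $\mathrm{diam}(\mathrm{supp}\,\zeta_n)$ by a uniform $S_0$, so each $\zeta_n$ is supported in $B(x,S_0)$. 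The main obstacle is then to convert this uniform containment of the weakly null orthonormal sequence $\zeta_n$ in a fixed finite region into a contradiction with $y_n\to\infty$, via local compactness of the conjugates; promoting the pointwise finiteness supplied by local compactness to a uniform geometric bound is exactly the delicate point, and the essential novelty of the section.

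The two closeness statements are then formal. If $U,V$ both cover $f$ with constants $C_U,C_V$, then inserting $\sum_y e_{yy}$ gives $(U^*V)_{x'x}=\sum_y (U_{yx'})^*V_{yx}$, so $(U^*V)_{x'x}\neq 0$ forces some $y$ with $U_{yx'}\neq 0$ and $V_{yx}\neq 0$, whence $d(f(x'),f(x))\leq C_U+C_V$; applying a uniformly expansive coarse inverse $g$ of $f$ together with the bound on $d(g\circ f,\mathrm{id})$ yields $d(x',x)\leq C'$ for a uniform $C'$, so $U^*V$ has propagation at most $C'$ and $U,V$ are close. Dually, if a single $U$ covers both $f$ and $f'$, then for each $x$ unitarity gives some $y$ with $U_{yx}\neq 0$, forcing $d(f(x),y)\leq C$ and $d(f'(x),y)\leq C'$ and hence $d(f(x),f'(x))\leq C+C'$ uniformly, so $f$ and $f'$ are close.
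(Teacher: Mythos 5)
Your part (2) and the two closeness claims are fine and amount to what the paper means by ``folklore'' and ``simple computations'' (the paper's own construction for (2) is the slightly cleaner block-diagonal one: partition $Y$ into uniformly bounded pieces $Y_n$ each meeting $f(X)$, set $X_n=f^{-1}(Y_n)$, and take $U=\oplus U_n$ for arbitrary unitaries $U_n:l^2(X_n,\h)\to l^2(Y_n,\h)$ between separable infinite-dimensional Hilbert spaces, which sidesteps your ``redistributing leftover dimensions'' step). The genuine gap is in part (1). Having reduced to the case of a fixed $x$ and distinct $y_n\to\infty$ with $U_{y_nx}\neq 0$, and having correctly deduced from Lemma \ref{jlem2} (applied on the $X$ side to the mutually orthogonal operators $U^*e_{(y_n,w_n),(y_n,w_n)}U$) that each $\zeta_n=U^*(\delta_{y_n}\otimes w_n)$ is supported in the finite set $F=B(x,S_0)$, you declare the remaining step ``the main obstacle'' and do not carry it out. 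As written, the proof of part (1) is therefore incomplete.

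The missing step is, however, exactly the device the paper already uses in the proof of Theorem \ref{fadthe} to show that $g^{-1}(x)$ is finite. The operator $T=\sum_n e_{(y_n,w_n),(y_n,w_n)}$ has propagation zero and rank-one matrix entries, so lies in $\C[Y]$; hence $P:=U^*TU$ lies in $\C[X]$, and it is the projection onto $\overline{\mathrm{span}}\{\zeta_n\}$, which has infinite rank since the $\zeta_n$ are orthonormal. Because every $\zeta_n$ is supported in $F$, we have $P=\chi_FP\chi_F=\sum_{x_1,x_2\in F}P_{x_1x_2}$, a finite sum of compact operators by local compactness of elements of $\C[X]$; so $P$ is compact, contradicting infinite rank. (Note that the cruder route via a uniform lower bound on $\|\zeta_n(x)\|$, as in Theorem \ref{fadthe}, is not available here --- one only knows $\zeta_n(x)\neq 0$ --- so the finite-support argument is the right one.) With this paragraph inserted, your case analysis closes. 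For what it is worth, the paper's own proof of part (1) consists of building $f$ as in Theorem \ref{athe} and asserting that ``clearly'' $U$ covers $f$; your instinct that the constant-$x$ case needs a separate argument is sound, but you must actually supply it.
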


\begin{proof}
For the first part, let $f$ be built from $U$ as in the proof of Theorem \ref{athe}; clearly $U$ covers $f$.

The second part is folklore, but does not seem to appear in the literature: this is perhaps due to $K$-theoretic arguments requiring only covering \emph{isometries}, not covering \emph{unitaries}.  We sketch a proof.   Let $f:X\to Y$ be a morphism in $\mathcal{C}_0$.  Consider any partition $Y=\sqcup Y_n$ with the properties:
\begin{itemize}
\item there exists $C>0$ such that diam$(Y_n)\leq C$ for all $n$;
\item for all $n$, $Y_n\cap \text{Image}(f)\neq \emptyset$.
\end{itemize}
Using the fact that $f$ is a coarse equivalence, it is not difficult to see that such a partition exists.  Define a partition $X=\sqcup X_n$ by setting $X_n=f^{-1}(Y_n)$, and fix arbitrary unitary isomorphisms $U_n:l^2(X_n,\h)\to l^2(Y_n,\h)$ (such exist as the domain and codomain are separable infinite dimensional Hilbert spaces).  Define $U$ to be the unitary
$$
U:=\oplus U_n:l^2(X,\h)\to l^2(Y,\h);
$$
it is not hard to see that $U$ is a morphism in $\mathcal{R}_0$ that covers $f$.

The final comments follow from simple computations.
\end{proof}

Given the preceding lemma, the proof of the following theorem is a series of routine checks, and is thus omitted.

\begin{equivthe}\label{equivthe}
Provisionally define functors
$$
\mathcal{F}:\mathcal{R}\to\mathcal{C} ~~\text{ and }~~\mathcal{U}:\mathcal{C}\to\mathcal{R}
$$
as follows: each is the identity on objects; $\mathcal{F}[U]$ is the class in $\mathcal{C}$ of any coarse equivalence covered by $U$;  $\mathcal{U}[f]$ is the class in $\mathcal{R}$ of any unitary isomorphism covering $f$.

Then $\mathcal{F}$ and $\mathcal{U}$ are well-defined and mutually inverse isomorphisms of categories. \qed
\end{equivthe}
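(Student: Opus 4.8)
The plan is to reduce the whole statement to Lemma \ref{covlem} together with the elementary bookkeeping of the covering relation in Definition \ref{cover}, invoking genuine coarse geometry at only one point. I would organise the verification into four stages: well-definedness of $\mathcal{F}$ and $\mathcal{U}$ on morphisms; preservation of identities; preservation of composition; and the two mutual-inverse identities $\mathcal{F}\circ\mathcal{U}=\mathrm{id}_{\mathcal{C}}$, $\mathcal{U}\circ\mathcal{F}=\mathrm{id}_{\mathcal{R}}$. Before starting I would record that the quotient categories $\mathcal{C}$ and $\mathcal{R}$ are genuinely defined, i.e. that closeness of coarse maps and closeness of unitaries are each congruences for composition in $\mathcal{C}_0$ and $\mathcal{R}_0$; this is routine but logically prior to discussing the functors.

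For well-definedness of $\mathcal{F}$, existence of a coarse equivalence covered by a given $U$ is part 1 of Lemma \ref{covlem}, and independence of which such map is chosen is exactly the \emph{moreover} clause of that lemma. The only substantive point is independence of the representative unitary: if $U$ and $V$ are close, so $W:=U^*V$ has finite propagation, and $U$ covers $f$, then writing $V=UW$ and expanding matrix coefficients, $V_{yx}\neq 0$ forces some $x'$ with $U_{yx'}\neq 0$ and $W_{x'x}\neq 0$; then $d(f(x'),y)\leq C$ and $d(x',x)\leq\pr(W)$, and uniform expansiveness of $f$ bounds $d(f(x),y)$ uniformly, so $V$ also covers $f$ and $\mathcal{F}[V]=\mathcal{F}[U]$. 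Dually, for $\mathcal{U}$ the existence and choice-independence of a covering unitary are part 2 and the \emph{moreover} clause of Lemma \ref{covlem}; for representative-independence one takes $f,g$ close with $U$ covering $f$ and $V$ covering $g$, notes that a nonzero $(U^*V)_{x'x}$ yields $y$ with $U_{yx'}\neq 0$ and $V_{yx}\neq 0$, hence $d(f(x'),g(x))\leq C_1+C_2$ and so $d(f(x'),f(x))$ bounded, and then uses that a coarse equivalence is a coarse embedding to conclude $d(x',x)$ is bounded, i.e. $U^*V$ has finite propagation and $[U]=[V]$.

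The remaining stages are formal. The identity unitary has only diagonal matrix coefficients, so it covers the identity map and conversely; hence both functors fix identity morphisms. For composition, if $U$ covers $f$ and $U'$ covers $f'$ then expanding $(U'U)_{zx}=\sum_y U'_{zy}U_{yx}$ and using uniform expansiveness of $f'$ shows $U'U$ covers $f'\circ f$, so $\mathcal{F}$ respects composition; once $\mathcal{F}$ is known to be a functor admitting a two-sided inverse bijection on each hom-set, functoriality of $\mathcal{U}$ comes for free. The mutual-inverse identities then follow directly from the \emph{moreover} clause: $\mathcal{U}[f]$ is the class of some $V$ covering $f$, and $\mathcal{F}$ of that class may be computed using $f$ itself as a map covered by $V$, giving $\mathcal{F}\mathcal{U}[f]=[f]$, and symmetrically $\mathcal{U}\mathcal{F}[U]=[U]$.

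The main obstacle is the representative-independence in the second stage: it is the sole place where one must use honest coarse geometry rather than manipulation of covering constants, namely that a coarse equivalence is simultaneously uniformly expansive and a coarse embedding---equivalently, that it admits a uniformly expansive coarse inverse (Definition \ref{cedef}). Every other step is a matching of covering constants through the formula for the matrix coefficients of a product, and should indeed reduce to the \emph{routine checks} the theorem advertises.
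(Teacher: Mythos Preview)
Your proposal is correct and is precisely what the paper intends: the paper omits the proof entirely, stating only that ``given the preceding lemma, the proof of the following theorem is a series of routine checks,'' and you have carried out exactly those checks, invoking Lemma \ref{covlem} at the expected places. Your isolation of the single genuinely coarse-geometric ingredient (that a coarse equivalence is effectively proper, via its coarse inverse) is accurate and is indeed the only step that goes beyond bookkeeping with covering constants.
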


\subsection*{Acknowledgements}

This piece grew out of questions asked (independently) of us by our doctoral advisors: Guoliang Yu for \v{S}pakula and John Roe for Willett.  We are very grateful to Yu and Roe for suggesting this line of research, and for their ongoing encouragement and support.

We would also like to thank Jintao Deng for pointing out a gap in an earlier version of Lemma \ref{cloudbound1}; Romain Tessera for suggesting  the use of metric sparsification in Section \ref{fadsec}; and Christian Voigt for suggesting the use of Lemma \ref{spatiallem}.

The first author was supported by the \emph{Deutsche
Forschungsgemeinschaft} (SFB 878).  The second author was supported by the US NSF (DMS 1101174).


\end{document}